\tikzset{->-/.style={decoration={
			markings,
			mark=at position #1 with {\arrow{>}}},postaction={decorate}}}
\tikzset{-<-/.style={decoration={
					markings,
					mark=at position #1 with {\arrow{<}}},postaction={decorate}}}
\newcommand{\bR}{\mathbb{R}}
\newcommand{\bZ}{\mathbb{Z}}
\newcommand\lra{\longrightarrow}
\newcommand\Diff{\mathrm{Diff}}
\newcommand\Homeo{\mathrm{Homeo}}
\newcommand\Emb{\mathrm{Emb}}
\newcommand\holim{\operatorname*{holim}}
\newcommand\tohofib{\operatorname*{tohofib}}
\newcommand{\map}{\mathrm{map}}
\renewcommand{\epsilon}{\varepsilon}
\newcommand{\OO}{\mathrm{O}}
\newcommand{\Top}{\mathrm{Top}}
\newcommand{\inj}{\mathrm{inj}}
\newcommand{\aug}{\mathrm{aug}}
\newcommand{\Wh}{\mathrm{Wh}}
\mathchardef\ordinarycolon\mathcode`\:
\theoremstyle{plain}
\newtheorem{MainThm}{Theorem}
\newtheorem{theorem}{Theorem}[section]
\newtheorem{proposition}[theorem]{Proposition}
\newtheorem{lemma}[theorem]{Lemma}
\newtheorem{corollary}[theorem]{Corollary}
\theoremstyle{definition}
\newtheorem{definition}[theorem]{Definition}
\newtheorem{example}[theorem]{Example}
\newtheorem{construction}[theorem]{Construction}
\theoremstyle{remark}
\newtheorem{remark}[theorem]{Remark}
\newtheorem*{remark*}{Remark}
\numberwithin{equation}{section}
\title{The Alexander trick for homology spheres}
\author{S{\o}ren Galatius}
\email{galatius@math.ku.dk}
\address{Department of Mathematics\\
	University of Copenhagen\\
	Denmark}
\author{Oscar Randal-Williams}
\email{o.randal-williams@dpmms.cam.ac.uk}
\address{Centre for Mathematical Sciences\\
Wilberforce Road\\
Cambridge CB3 0WB\\
UK}
\date{\today}
\subjclass[2010]{57R80,
 57N35,
 57R40
}
\begin{document}
\begin{abstract}
We show that the group of homeomorphisms of a compact contractible $d$-manifold which fix the boundary is contractible, as long as $d \geq 6$. We deduce this from a strong uniqueness statement for one-sided $h$-cobordisms.
\end{abstract}
\maketitle

\section{Introduction}

\subsection{Contractible manifolds}
Let $\Delta$ be a $d$-dimensional compact topological manifold, which is contractible.  Then $\partial \Delta$ is necessarily a homology sphere, but need not be simply connected.  The main result we wish to explain is as follows.

\begin{MainThm}\label{thm:A}
If $d \geq 6$ then the topological group $\mathrm{Homeo}_\partial(\Delta)$ is weakly contractible.
\end{MainThm}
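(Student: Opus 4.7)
Following the abstract, the plan is to reformulate Theorem~\ref{thm:A} as the contractibility of a moduli space of contractible fillings of $\partial \Delta$, and then derive this from the parameterized strong uniqueness statement for such fillings, i.e.\ for ``one-sided $h$-cobordisms.'' The classical Alexander trick for $\Delta = D^d$ rests on the cone structure $D^d = C(S^{d-1})$ and does not generalize when $\Delta$ is not a disk; topological $h$-cobordism theory (and the hypothesis $d\geq 6$) must replace it.

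\textbf{Moduli-space setup.} Fix a topological embedding $\partial \Delta \hookrightarrow \R^\infty$ and let $\mathcal{X}$ be the space of compact contractible $d$-dimensional submanifolds $W \subset \R^\infty$ with $\partial W$ equal to this fixed copy; let $\mathcal{X}_\Delta \subset \mathcal{X}$ denote the components consisting of those $W$ that are abstractly homeomorphic to $\Delta$ rel boundary. The space $\Emb_\partial(\Delta,\R^\infty)$ of topological embeddings rel boundary is weakly contractible (a standard consequence of topological isotopy extension and the contractibility of $\Emb(\Delta,\R^\infty)$ and $\Emb(\partial\Delta,\R^\infty)$), and the ``take the image'' map $\Emb_\partial(\Delta,\R^\infty)\to\mathcal{X}_\Delta$ is a principal $\Homeo_\partial(\Delta)$-bundle, again by topological isotopy extension. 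This produces a weak equivalence
\[
B\Homeo_\partial(\Delta) \simeq \mathcal{X}_\Delta,
\]
so Theorem~\ref{thm:A} becomes the assertion that $\mathcal{X}_\Delta$ is weakly contractible.

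\textbf{Unparameterized uniqueness.} In dimensions $d \geq 6$ any two compact contractible $d$-manifold fillings of $\partial \Delta$ are homeomorphic rel boundary: the double $W_1 \cup_{\partial \Delta} W_2$ is simply connected by van Kampen (both sides are contractible, $\partial\Delta$ is connected) and has the homology of $S^d$ by Mayer--Vietoris (using that $\partial\Delta$ is a homology sphere), hence is homeomorphic to $S^d$ by the topological Poincar\'e conjecture; the complement of $W_1 \subset S^d$ is then determined up to rel-boundary homeomorphism by a classical topological $h$-cobordism argument (the relative Whitehead torsion vanishing because $\Delta$ is simply connected). This yields $\mathcal{X} = \mathcal{X}_\Delta$, so it suffices to show $\mathcal{X}$ itself is weakly contractible.

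\textbf{Parameterized uniqueness and conclusion -- the main obstacle.} The remaining task -- showing $\mathcal{X} \simeq \ast$ -- is precisely the strong uniqueness for one-sided $h$-cobordisms advertised in the abstract, and is the technical heart of the argument. Concretely, given a sphere family $\{W_s\}_{s\in S^k}\subset\mathcal{X}$, one must construct a coherent family of homeomorphisms $\Delta \to W_s$ rel boundary. I expect this to proceed by doubling the family with a fixed filling $W_\ast$ to obtain a family of homotopy $d$-spheres, invoking a parameterized version of the topological Poincar\'e conjecture / uniqueness of embedded discs in $S^d$, and extracting the desired family of homeomorphisms from a parameterized topological $s$-cobordism theorem of Hatcher--Wagoner / Weiss--Williams type. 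The hypothesis $d\geq 6$ is essential for topological handle theory and for these parameterized $h$-cobordism tools; no Whitehead or $A$-theoretic obstructions appear in the unparameterized setting because $\Delta$ is simply connected, but controlling the higher-parameter obstructions is where the real work lies. With $\mathcal{X}\simeq\ast$ established, the earlier steps combine to give $B\Homeo_\partial(\Delta) \simeq \ast$, and hence $\Homeo_\partial(\Delta)$ is weakly contractible.
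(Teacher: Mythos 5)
Your moduli-space reformulation and the unparameterized uniqueness step are both correct and parallel to what the paper discusses: $B\Homeo_\partial(\Delta)$ is indeed a moduli space of topological contractible fillings of $\Sigma = \partial\Delta$, and the double-and-apply-Poincar\'e/topological-$h$-cobordism argument for path-connectedness is essentially the one the paper records. However, this reformulation merely restates Theorem~\ref{thm:A}; the entire content of the theorem is the parameterized step, and there your proposal has a genuine gap. The appeal to a ``parameterized topological $s$-cobordism theorem of Hatcher--Wagoner / Weiss--Williams type'' cannot produce contractibility: those theories identify spaces of $h$-cobordisms (or concordances, or pseudoisotopies) with algebraic $K$-theory / $A$-theory spaces, which are highly non-trivial even in the simply connected case. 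In particular, for a family of homotopy $d$-spheres there is no ``parameterized topological Poincar\'e conjecture'' trivialization to invoke ($B\Homeo(S^d)$ is far from contractible), and acknowledging that ``controlling the higher-parameter obstructions is where the real work lies'' is not an argument. So your strategy, as stated, does not close.

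The paper's route is genuinely different and avoids exactly this trap. It first proves a \emph{smooth} statement, Theorem~\ref{thm:Asmooth}: for a chosen smooth structure $s$ on $\Delta$ and an embedded disc $D^d\subset\Delta$, the map $\Diff_\partial(D^d)\to\Diff_\partial(\Delta,s)$ is a weak equivalence. This is deduced from Theorem~\ref{thm:B} (contractibility of $\Emb_B(C,M)$ for a one-sided $h$-cobordism $C:B\leadsto B'$ in dimension $\geq 6$), applied to $C=\Delta\setminus\mathrm{int}(D^d)$ via isotopy extension; Theorem~\ref{thm:B} in turn is proved by either Goodwillie--Weiss embedding calculus (the handle dimension of $C$ rel $B'$ is $\geq 3$, so the tower converges) or by a semi-simplicial resolution by $2$-handles. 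The passage from the smooth Theorem~\ref{thm:Asmooth} to the topological Theorem~\ref{thm:A} then uses Kirby--Siebenmann parameterized smoothing theory: the smooth moduli space of contractible fillings of $(\Sigma,s_\Sigma)$ has the homotopy type $\coprod_{[\Gamma]\in\Theta_d}B\Diff_\partial((\Delta,s)\#\Gamma)$, and smoothing theory gives a fibration sequence
\begin{equation*}
  \map_*(\Delta/\partial\Delta,\, \Top(d)/\OO(d)) \lra \coprod_{[\Gamma]\in\Theta_d}B\Diff_\partial((\Delta,s)\#\Gamma) \lra B\Homeo_\partial(\Delta).
\end{equation*}
Comparing this with the analogous sequence for $D^d$, and observing that collapsing the one-sided $h$-cobordism induces a weak equivalence $\Delta/\Sigma\to D^d/\partial D^d$ and hence an equivalence of the section-space fibres, one gets a homotopy cartesian square; Theorem~\ref{thm:Asmooth} then forces the map $B\Homeo_\partial(D^d)\to B\Homeo_\partial(\Delta)$ to be a weak equivalence, and the left side is contractible by the classical Alexander trick. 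Your proposal omits this detour through the smooth category and Kirby--Siebenmann theory entirely, yet that detour is what replaces the unavailable parameterized topological argument you gesture at. You should also note that the $\Theta_d$-indexing of components is not a decoration: the smooth moduli space is genuinely disconnected, which is why the comparison must be phrased with the full coproduct over $\Theta_d$ rather than a single component.
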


In the case that $\Delta = D^d$ is the disc of any dimension, this theorem was proved a hundred years ago by J.\ W.\ Alexander \cite{Alexander}, using the explicit radial deformation now known as the ``Alexander trick''. For a general contractible manifold there is no such convenient coordinate system and we will have to proceed less directly. Nonetheless, we consider this theorem as the replacement of the Alexander trick for general $\Delta$, even though it does not provide an explicit contraction. A further interpretation of this result arises by considering the homotopy fibre sequence
\begin{equation}\label{eq:4}
\mathrm{Homeo}_\partial(\Delta) \lra \mathrm{Homeo}(\Delta) \overset{\mathrm{res}}\lra \mathrm{Homeo}(\partial \Delta)
\end{equation}
given by restricting homeomorphisms of $\Delta$ to its boundary.   It
can be seen that $\mathrm{res}$ is surjective, and Theorem~\ref{thm:A} can be interpreted as saying that \emph{families} of homeomorphisms of $\partial \Delta$ can be extended to homeomorphisms of $\Delta$, in an essentially unique way.

In the range of dimensions $d \geq 6$ which we are considering, any homology $(d-1)$-sphere $\Sigma$ is the boundary of a contractible manifold by a well-known theorem of Hsiang--Hsiang \cite[Theorem 5.6]{HsiangHsiang} and Kervaire \cite[Theorem 3]{Kervaire} (together with some triangulation theory). It is also known, although perhaps less so, that any two such bounding manifolds are homeomorphic relative to $\Sigma$. If the latter statement is interpreted as the moduli space of contractible fillings of the homology sphere $\Sigma$ being path-connected, then our theorem can be interpreted as saying that this moduli space is in fact contractible: this is the ``Alexander trick'' for homology spheres.

If $d \leq 3$ then a contractible compact manifold $\Delta$ is homeomorphic to $D^d$ (for $d=3$ by the Poincar\'e conjecture) and so the conclusion of the theorem is true by the Alexander trick. We were in fact motivated by the case $d=4$ in relation to \cite[Section 4.3]{GRWPont}, but the methods we will discuss here do not apply in that case, nor to $d=5$.

As part of the proof of Theorem \ref{thm:A} we will also prove the following statement about the topological group of diffeomorphisms of $\Delta$ when this manifold has a smooth structure.

\begin{MainThm}\label{thm:Asmooth}
  For any smooth structure on $\Delta^d$ and any smooth embedding $D^d \subset \Delta$, the map $\mathrm{Diff}_\partial(D^d) \to \mathrm{Diff}_\partial(\Delta)$ given by extending diffeomorphisms by the identity is a weak equivalence, provided $d \geq 6$.
\end{MainThm}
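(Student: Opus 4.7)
My plan is to reduce Theorem~\ref{thm:Asmooth} to the parameterised strong uniqueness of one-sided $h$-cobordisms promised in the abstract. Set $W := \Delta \setminus \iota(\mathrm{int}\, D^d)$, a compact smooth $d$-manifold with $\partial W = S^{d-1} \sqcup \partial \Delta$. Since $\Delta$ and $D^d$ are both contractible and $d \geq 3$, van Kampen and Mayer--Vietoris together show that the inclusion $S^{d-1} \hookrightarrow W$ is a homotopy equivalence, so $W$ is a smooth one-sided $h$-cobordism from $S^{d-1}$ to $\partial \Delta$.

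Extended by the identity on $W$, the group $\Diff_\partial(D^d)$ sits inside $\Diff_\partial(\Delta)$ as the stabiliser of the inclusion under the post-composition action of $\Diff_\partial(\Delta)$ on the space $\mathcal{E} := \Emb_{\partial \Delta}(W, \Delta)$ of smooth embeddings of $W$ into $\Delta$ restricting to the identity on $\partial \Delta$. Parameterised isotopy extension makes the orbit map into a principal $\Diff_\partial(D^d)$-bundle onto its image $\mathcal{E}_0 \subseteq \mathcal{E}$, giving a homotopy fibre sequence
$$\Diff_\partial(D^d) \lra \Diff_\partial(\Delta) \lra \mathcal{E}_0.$$
Theorem~\ref{thm:Asmooth} is therefore equivalent to the weak contractibility of $\mathcal{E}_0$. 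Taking complements, $\mathcal{E}_0$ can be reinterpreted as the space of smoothly embedded $d$-discs in the interior of $\Delta$ whose complementary $h$-cobordism to $\partial \Delta$ is diffeomorphic to $W$ rel $\partial \Delta$; equivalently, it is the moduli of smooth one-sided $h$-cobordism decompositions $\Delta = D^d \cup_{S^{d-1}} W'$ relative to $\partial \Delta$. Its weak contractibility is exactly the parameterised strong uniqueness of such decompositions.

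The main obstacle is therefore not the reduction above, which is essentially formal, but the strong uniqueness statement itself. Unlike in Theorem~\ref{thm:A}, where Alexander's explicit radial contraction handles the disc and only the extension to $\Delta$ is genuinely non-trivial, the smooth version admits no such elementary contraction --- indeed $\Diff_\partial(D^d)$ is itself highly non-trivial, so one cannot simply deform to the identity. I would expect the proof of strong uniqueness to involve serious parameterised $h$-cobordism or surgery theory, very possibly leveraging Theorem~\ref{thm:A} together with smoothing theory (identifying the difference between smooth and topological decompositions with sections of a $\mathrm{Top}/\mathrm{O}$-bundle on $W$) to transfer the topological contractibility to the smooth setting.
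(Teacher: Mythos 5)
Your reduction is essentially the paper's: set $C := \Delta \setminus \mathrm{int}(D^d)$ (your $W$), observe it is a one-sided $h$-cobordism, and use parameterised isotopy extension to get a fibration
\begin{equation*}
\Diff_\partial(D^d) \lra \Diff_\partial(\Delta) \lra \mathrm{Emb}_{\partial\Delta}(C,\Delta).
\end{equation*}
Two comments on the details. First, you need not restrict to the orbit $\mathcal{E}_0$ of the inclusion; the map $\Diff_\partial(\Delta) \to \mathrm{Emb}_{\partial\Delta}(C,\Delta)$ is already a Serre fibration with fibre $\Diff_\partial(D^d)$ over the inclusion, and it suffices to show the \emph{whole} base is weakly contractible. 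Second, when you call $W$ a one-sided $h$-cobordism ``from $S^{d-1}$ to $\partial\Delta$'' you have the orientation backwards: in the paper's convention $B = \partial\Delta$ is the acyclic end and $B' = S^{d-1}$ is the homotopy-equivalent end.

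The genuine gap is in how you propose to close the argument. The contractibility of $\mathrm{Emb}_{\partial\Delta}(C,\Delta)$ is exactly Theorem~\ref{thm:B} applied with $M=\Delta$, the one-sided $h$-cobordism $C$ over $B\{e\}$, and $e_0$ the inclusion $\partial\Delta \hookrightarrow \partial\Delta$; the $\pi_1$-condition of that theorem is vacuous since $\pi_1(\Delta)=1$. You should simply invoke it. Instead you speculate that the needed contractibility might be obtained by ``leveraging Theorem~\ref{thm:A} together with smoothing theory.'' This is backwards relative to the paper's logical structure and, as things stand, would be circular: the paper proves Theorem~\ref{thm:B} independently (by embedding calculus, or by a semi-simplicial resolution by handle-subtraction), deduces Theorem~\ref{thm:Asmooth} from it via precisely the fibration above, and only \emph{then} proves Theorem~\ref{thm:A} by combining Theorem~\ref{thm:Asmooth} with the Kirby--Siebenmann smoothing-theory comparison between $\Delta$ and $D^d$ (together with the classical Alexander trick for $\Homeo_\partial(D^d)$). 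No independent proof of Theorem~\ref{thm:A} is given from which the smooth statement could be recovered.
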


\subsection{One-sided $h$-cobordisms}

We will deduce Theorems \ref{thm:A} and \ref{thm:Asmooth} from a general result about smooth embeddings of one-sided $h$-cobordisms, which also go under the name of ``semi-$h$-cobordisms'', cf.\ \cite[Section 11.2]{FreedmanQuinn}, \cite[Section 3]{GuilbaultTinsleyIII}, \cite[Section 4]{GuilbaultTinsley}, \cite[Section 3]{SuYe}.

\begin{definition}
Let $\pi$ be a discrete group, $B$ be a smooth compact manifold with boundary, and $f_B : B \to B\pi$ be a 1-connected map. A \emph{one-sided $h$-cobordism} on $B$ over $B\pi$ is a smooth cobordism $C : B \leadsto B'$, restricting to a trivial cobordism $\partial B \leadsto \partial B'$, and a map $f : C \to B\pi$ extending $f_B$, such that
\begin{enumerate}[(i)]
\item $f$ is 2-connected, and 
\item $B' \to C$ is a homotopy equivalence.
\end{enumerate}
\end{definition}

In this situation it follows from Lefschetz duality that $H_*(C, B ; \bZ[\pi])=0$, i.e.\ that the inclusion $B \to C$ is an acyclic map (recall that a map $f: X \to Y$ is acyclic when its homotopy fibres have the $\bZ$-homology of a point, see also \cite{RaptisAcyclic} and the references therein for equivalent definitions).  In particular $B$ is path connected, and we choose a basepoint $b \in B$ and look at the induced homomorphism $(f_B)_*: \pi_1(B,b) \to \pi_1(B\pi,f_B(b)) \cong \pi$, whose kernel $P \subset \pi_1(B,b)$ must be perfect if a one-sided $h$-cobordism over $B\pi$ exists. Furthermore, choosing a handle structure on $C$ and eliminating 0- and 1-handles relative to $B$ (using that the inclusion $B \to C$ is 1-connected), the finitely-many 2-handles of $C$ show that $P$ is generated by finitely-many conjugacy classes in $\pi_1(B,b)$.

This has a converse by implementing the plus-construction with manifolds, as first introduced by Kervaire \cite{Kervaire}. Given a $(d-1)$-manifold $B$ and a 1-connected map $f_B : B \to B\pi$ whose kernel on $\pi_1$ is a perfect group $P$ which is generated by finitely-many conjugacy classes in $\pi_1(B,b)$, using that $d \geq 6$ one can attach 2- and 3-handles to $B$ as dictated by the plus-construction to obtain a cobordism $C^+ : B \leadsto B^+$ such that the inclusion $B \to C^+$ is acyclic and the kernel on $\pi_1$ is $P$. The map $f_B$ therefore extends to a 2-connected map $f^+ : C^+ \to B\pi$. As $C^+$ can be obtained from $B^+$ by attaching handles of index $d-2, d-3 \geq 3$, the inclusion $B^+ \to C^+$ is 2-connected, so by an application of Lefschetz duality it is an equivalence.  (A different kind of converse has been considered by \cite{Rolland}.)

An appropriate uniqueness theorem for one-sided $h$-cobordisms has been discussed by Freedman--Quinn \cite[p.\ 197]{FreedmanQuinn} and then by Guilbault--Tinsley \cite[Theorem 4.1]{GuilbaultTinsley} and most generally by Su--Ye \cite[Theorem 3.2]{SuYe}. The homotopy equivalence $B' \to C$ has a Whitehead torsion $\tau(C, B') \in \Wh(\pi)$, and these authors show that one-sided $h$-cobordisms on $B$ over $B\pi$ are classified up to diffeomorphism relative to $B$ by this torsion, at least when $\dim(B) \geq 6$. By similar considerations, Guilbault--Tinsley \cite[Theorem 3.2]{GuilbaultTinsleyIII} show that a one-sided $h$-cobordism embeds relative to $B$ into any manifold $M$, extending a given $e_0 : B \hookrightarrow \partial M$, as long $\pi_1(B,b) \to \pi_1(\partial M,b) \to \pi_1(M,b)$ factors through $(f_B)_* : \pi_1(B,b) \to \pi$. The following can be considered as a strong uniqueness statement for such embeddings.

\begin{MainThm}\label{thm:B}
Let $d \geq 6$ and $M$ be a $d$-dimensional manifold with boundary. Let $(C : B \leadsto B', f)$ be a $d$-dimensional one-sided $h$-cobordism over $B\pi$ and $e_0 : B \hookrightarrow \partial M$ be an embedding such that $\pi_1(B,b) \to \pi_1(\partial M,b) \to \pi_1(M,b)$ factors through $(f_B)_* : \pi_1(B,b) \to \pi$. Then the space $\mathrm{Emb}_{B}(C, M)$, of smooth embeddings of $C$ into $M$ relative to $B$, is contractible.
\end{MainThm}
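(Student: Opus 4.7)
The plan is to show that $\mathrm{Emb}_B(C,M)$ is non-empty (which is essentially the Guilbault--Tinsley existence theorem recalled in the introduction) and that every higher homotopy group vanishes. For the latter, I would show that every based map $\Phi : S^k \to \mathrm{Emb}_B(C,M)$ admits an extension to $D^{k+1}$. Unwinding definitions, this is equivalent to extending the corresponding fibrewise smooth embedding $\widehat\Phi : S^k \times C \hookrightarrow S^k \times M$ (restricting to $\mathrm{id}_{S^k} \times e_0$ on $S^k \times B$) to a fibrewise smooth embedding over $D^{k+1}$ that agrees with $\mathrm{id}_{D^{k+1}} \times e_0$ on $D^{k+1} \times B$. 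In this form the problem is a parametric version of the existence result, so the strategy is to adapt the Kervaire/Guilbault--Tinsley argument to families.

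The central structural input is the handle-theoretic plus-construction description from the excerpt: $C$ admits a handle decomposition relative to $B$ with only 2-handles and 3-handles. Combined with the observation that $B \hookrightarrow C$ is acyclic with $\bZ[\pi]$-coefficients, the obstructions to extending a fibrewise family over $S^k$ to one over $D^{k+1}$—which naively live in twisted cohomology groups of the pair $(C,B)$—should vanish under the $\pi_1$-factorization hypothesis, since the factorization turns $\pi_*(M)$ into a $\pi$-module up to basepoint subtleties, and $\bZ[\pi]$-acyclicity then kills the obstruction. Concretely I would induct on the handle index. First extend the family of embedded 2-handle cores: the attaching circles lie in the perfect kernel $P \subset \pi_1(B,b)$ and die in $\pi_1(M)$ by the factorization hypothesis, so a family of bounding 2-disks exists, and parametric general position in codimension $d-2 \geq 4$ makes it embedded and disjoint with fibrewise framings. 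Then extend the family of 3-handle cores: their attaching 2-spheres become null-homotopic in $M$ by 2-connectedness of $f : C \to B\pi$ together with $\pi_2(B\pi)=0$, and a parametric Whitney trick in codimension $d-3 \geq 3$ produces the embedded family. The 3-handles play the essential role of absorbing higher $\pi_*(M)$-valued obstructions that the 2-handles alone cannot resolve, mirroring how they kill homology in the non-parametric plus construction. That the constructed fibrewise cobordism is $D^{k+1} \times C$ rather than some other parametrized one-sided $h$-cobordism is enforced by local constancy of the Whitehead torsion together with the uniqueness theorem of Su--Ye.

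The main technical obstacle is executing the parametric Whitney trick in families, carefully bookkeeping equivariant intersection numbers in $\bZ[\pi_1(M)]$, and then fitting the resulting extensions of individual handles together into a single extended embedding of $D^{k+1} \times C$ that matches the given boundary data on $S^k \times C$. The dimensional hypothesis $d \geq 6$ is essential throughout: it ensures codimension at least 3 for all Whitney disks, without which the parametric surgery arguments break down. I expect that the technical heart of the paper lies in setting up the parametric surgery framework in a way that makes the $\bZ[\pi]$-acyclicity input directly applicable, and in verifying that the compatibility obstructions between handles of different indices reduce cleanly to this acyclicity.
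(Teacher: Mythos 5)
Your approach is genuinely different from both of the paper's two proofs. The first proof runs the whole question through Goodwillie--Weiss embedding calculus: the acyclicity of $B \hookrightarrow C$ forces all the layers $L_k\mathrm{Emb}_B(C,M)$ and the linear approximation $T_1\mathrm{Emb}_B(C,M)$ to be contractible (via a universal property of acyclic maps and a lemma that the ``fat diagonal plus boundary'' $D_k \subset C^k$ is acyclic), and the handle bound $\leq d-3$ gives convergence. The second proof is a semi-simplicial resolution: one introduces an auxiliary cobordism $T$ (the trace of surgery on circles generating $\ker(\pi_1 B \to \pi)$), builds flag complexes $X_\bullet$ and $Z_\bullet$ whose $p$-simplices carry $p+1$ disjoint embedded copies of $T$, and observes that once a $T$ has been excised from $M$, the cobordism $C$ augmented by a collar becomes an honest $h$-cobordism, so embedding spaces of it are contractible. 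Neither proof touches a parametric Whitney trick.

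The central gap in your proposal is precisely the step you flag as ``the main technical obstacle.'' A parametric Whitney trick, or more generally parametric general position for families of handle cores, is not an off-the-shelf tool, and the dimension counts you quote are not sufficient in families. Two $2$-disks in $M^d$ are generically disjoint when $4 < d$, but for a $k$-parameter family of pairs of $2$-disks the codimension requirement degrades to roughly $4 + k < d$, so codimension $d-2 \geq 4$ does not protect you for all $k$. Getting past this is essentially the content of the Goodwillie--Klein convergence theorem for embedding calculus, or alternatively of the resolution argument; just citing ``parametric general position'' or a ``parametric Whitney trick'' begs the question. Likewise the claim that the attaching $2$-spheres of the $3$-handles die in $M$ ``by 2-connectedness of $f: C \to B\pi$ and $\pi_2(B\pi) = 0$'' does not follow as stated: those spheres live in the middle level after the $2$-handle surgeries, not in $B$, and their images in $M$ must be controlled by $\bZ[\pi]$-acyclicity of $(C,B)$ together with the factorization hypothesis, which requires a genuine obstruction-theoretic setup rather than a pointwise $\pi_2$ computation. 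Finally, the appeal to Su--Ye plus local constancy of Whitehead torsion to identify the constructed family with $D^{k+1} \times C$ is either unnecessary (if you build the handles as a family, the domain is a product by construction) or circular (a fiberwise diffeomorphism to $C$ is not a trivialization of the family, and that parametric uniqueness is exactly what Theorem~\ref{thm:B} is claiming). In short, the plan is a direct generalization of the Kervaire/Guilbault--Tinsley existence argument to families, and the difficulty of making that generalization rigorous is exactly why the paper reaches for embedding calculus and the semi-simplicial resolution instead.
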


It will be a consequence of our argument that this space of embeddings is non-empty, giving a new proof of \cite[Theorem 3.2]{GuilbaultTinsleyIII} and \cite[Theorem 4.2]{GuilbaultTinsley}. (We have not tried to produce higher-homotopy strengthenings of \cite[Theorems 5.2 and 5.3]{GuilbaultTinsleyIII} though.)
Applied with $M$ being another one-sided $h$-cobordism on $B$ over $B\pi$, it re-proves that one-sided $h$-cobordisms embed into each other, so with the $s$-cobordism theorem re-proves that one-sided $h$-cobordisms are classified by their Whitehead torsion.

We offer two proofs of Theorem \ref{thm:B}. The first proof uses Goodwillie--Weiss' embedding calculus \cite{EmbImmI,EmbImmII}. The handle dimension of a one-sided $h$-cobordism $C$ is such that this calculus converges, so $\mathrm{Emb}_{B}(C, M)$ can be accessed by direct calculation. The second proof does not rely on any machinery, and proceeds by constructing a semi-simplicial resolution of $\mathrm{Emb}_{B}(C, M)$ by subtracting handles, reminiscent of our earlier work on parameterised surgery (particularly \cite{grwstab2}). In Section \ref{sec:Consequences} we describe some consequences of Theorem \ref{thm:B} for more general spaces of embeddings, and for embedding calculus.

\begin{remark}
  In a very recent preprint, Krannich and Kupers explain how to prove our Theorems A--C for $d \geq 5$, see \cite[Theorem 6.18]{KK24}.  Part of their work establishes convergence of embedding calculus in the presence of certain handles of codimension 2, which allows an adapted version of our ``first proof'' to apply.

  It was shown recently in \cite[Theorem 1.7]{Krushkaletal} that our Theorem~\ref{thm:Asmooth} does not hold for $d=4$. Explicit $\Delta$'s for which $\pi_0(\mathrm{Diff}_\partial(D^4)) \to \pi_0(\mathrm{Diff}_\partial(\Delta))$ is not surjective are now known \cite[Theorem D]{Konnoetal}.
\end{remark}

\subsection{Proof of Theorems \ref{thm:A} and \ref{thm:Asmooth}, using Theorem~\ref{thm:B}}

Suppose first that $\Delta$ is a contractible manifold of dimension $d \geq 6$ which is endowed with a smooth structure, and choose a smooth embedding
$$ D^d \subset \mathrm{int}(\Delta).$$
We obtain a smooth cobordism $C := \Delta \setminus \mathrm{int}(D^d) : \partial \Delta \leadsto \partial D^d$, which is a one-sided $h$-cobordism on $\partial \Delta$ over $B\{e\}$. The parameterised isotopy extension theorem gives a fibration sequence
$$\Diff_{\partial }(D^d) \lra \Diff_{\partial }(\Delta) \lra \mathrm{Emb}_{\partial \Delta}(C, \Delta),$$
and by an application of Theorem \ref{thm:B} the base of this fibration is contractible and so the left-hand map is a weak equivalence: this proves Theorem \ref{thm:Asmooth}.

In Theorem \ref{thm:A} we assume merely that $\Delta$ is a contractible topological manifold of dimension $d \geq 6$, but it follows from smoothing theory \cite[Essay IV \S 10]{kirbysiebenmann} that it admits a smooth structure: by op.cit., it suffices to give a vector bundle reduction of its stable tangent microbundle and, $\Delta$ being contractible, there is no obstruction to doing so.  Choose one such smooth structure $s$ and write $s_\Sigma$ for the induced smooth structure on $\Sigma = \partial \Delta$.  We will keep the smooth structure $s_\Sigma$ fixed, but must discuss other smooth contractible manifolds bounding $(\Sigma,s_\Sigma)$ besides $(\Delta,s)$.  If $(\Delta',s')$ is another smooth contractible filling, then $s$ and $s'$ glue to a smooth structure on the $d$-dimensional homotopy sphere $\Delta \cup_\Sigma \Delta'$, and since oriented homotopy spheres up to oriented diffeomorphism form a group $\Theta_d$ with respect to connected sum, there exists a unique element $[\Gamma] \in \Theta_d$ such that $(\Delta,s) \cup_\Sigma ((\Delta',s') \# \Gamma)$ is oriented diffeomorphic to the standard $S^d$.  Then $D^{d+1}$ can be interpreted as a smooth $h$-cobordism from $(\Delta,s)$ to $(\Delta',s') \# \Gamma$ relative to $\Sigma$, so by the $h$-cobordism theorem they are diffeomorphic relative to $\Sigma$.  We have argued that the map of sets
\begin{align*}
  \Theta_d & \lra \{(\Delta',s') \mid (\partial \Delta',s'_{\vert \Sigma}) = (\Sigma,s_\Sigma) \} / \text{diffeomorphism relative to $\Sigma$}\\
  [\Gamma] & \longmapsto (\Delta,s) \# \Gamma
\end{align*}
is a bijection.

Incidentally, the same argument applies\footnote{In the topological case this uniqueness statement holds for any $d$, see for instance \cite[Remark 21.2]{DET} for the case $d=4$.} to fillings of $\Sigma$ by contractible topological manifolds, using the topological Poincar\'e conjecture \cite{NewmanEngulfing} to deduce that $\Delta \cup_\Sigma \Delta'$ is homeomorphic to $S^d$ and the topological $h$-cobordism theorem \cite[Essay III \S 3.4]{kirbysiebenmann} to deduce that $\Delta$ is homeomorphic to $\Delta'$ relative to $\Sigma$.  This uniqueness of contractible topological fillings also implies
that the map $\mathrm{res}$ in~(\ref{eq:4}) is surjective: indeed, for any $\phi \in \mathrm{Homeo}(\partial \Delta)$ we can regard $(\Delta,\phi)$ as a new filling of $\Sigma = \partial \Delta$, which must then be homeomorphic to $(\Delta,\mathrm{id})$ relative to $\Sigma$.

The moduli space of smooth contractible fillings of $(\Sigma,s_\Sigma)$ can be defined as the classifying space of the topological groupoid whose objects are the smooth structures $s'$ on $\Delta$ agreeing with $s_\Sigma$ on the boundary, and whose morphisms $s' \to s''$ are the diffeomorphisms $(\Delta,s') \to (\Delta,s'')$ relative to $\Sigma$.  This moduli space has the homotopy type
\begin{equation*}
  \coprod_{[\Gamma] \in \Theta_d} B\Diff_\partial((\Delta,s)\# \Gamma)
\end{equation*}
and comes with an evident map to $B\Homeo_\partial(\Delta)$, whose homotopy fibre can be described using the parameterised smoothing theory of \cite[Essay V]{kirbysiebenmann}.  Indeed, the relative statement of \cite[Essay V, \S 3]{kirbysiebenmann} identifies this homotopy fibre with a space of sections over $\Delta$, fixed over $\Sigma = \partial \Delta$, of a fibration with fibre $\Top(d)/\OO(d)$.  Since $\Delta$ is contractible, this fibration is trivial and we deduce a fibration sequence
\begin{equation*}
  \mathrm{map}_*(\Delta/\partial \Delta, \Top(d)/\OO(d)) \lra
  \coprod_{[\Gamma] \in \Theta_d} B\Diff_\partial((\Delta,s)\# \Gamma)
  \lra B\Homeo_\partial(\Delta).  
\end{equation*}
Comparing this fibre sequence to the analogous sequence with $D^d$ in place of $\Delta$, we deduce a square
\begin{equation*}
  \begin{tikzcd}
    \coprod\limits_{[\Gamma] \in \Theta_d} B\mathrm{Diff}_{\partial }(D^d \# \Gamma)\rar \dar& B\mathrm{Homeo}_{\partial }(D^d) \dar \\
    \coprod\limits_{[\Gamma] \in \Theta_d} B\mathrm{Diff}_{\partial }((\Delta,s)\# \Gamma)\rar& B\mathrm{Homeo}_{\partial }(\Delta)
  \end{tikzcd}
\end{equation*}
whose vertical maps are induced by gluing a one-sided $h$-cobordism $C: (\Sigma,s_\Sigma) \leadsto \partial D^d$.  Collapsing $C$ induces a weak equivalence $\Delta/\Sigma \to D^d/\partial D^d$, so the induced map of horizontal fibres is a weak equivalence by the Kirby--Siebenmann smoothing theory descriptions, and hence the square is homotopy cartesian.  Theorem~\ref{thm:Asmooth} asserts that the left vertical map in the square is a weak equivalence when restricted to a map between the components corresponding to $0 \in \Theta_d$, and the map between the components corresponding to $[\Gamma] \in \Theta_d$ may be identified with the map between the components corresponding to $0 \in \Theta_d$ by boundary connected sum with $D^d \# \Gamma$.  Since the left vertical map evidently induces a bijection on $\pi_0$, it is therefore a weak equivalence and we deduce that the right vertical map is too, finishing the proof of Theorem~\ref{thm:A}.

\subsection{Acknowledgements}
SG was supported by the Danish National Research Foundation (DNRF151). ORW was supported by the ERC under the European Union's Horizon 2020 research and innovation programme (grant agreement No.\ 756444).

\section{First proof of Theorem \ref{thm:B}: Embedding calculus}\label{sec:EmbCalcProof}

We can approach the space of embeddings $\mathrm{Emb}_{B}(C, M)$ using the embedding calculus of Goodwillie and Weiss \cite{EmbImmI,EmbImmII}. Recall that this theory provides a tower
$$\cdots\lra T_3\mathrm{Emb}_{B}(C, M) \lra T_2\mathrm{Emb}_{B}(C, M) \lra T_1\mathrm{Emb}_{B}(C, M)$$
of spaces under $\mathrm{Emb}_{B}(C, M)$, and hence a map
\begin{equation*}
\mathrm{Emb}_{B}(C, M) \lra T_\infty \mathrm{Emb}_{B}(C, M) := \holim_r T_r \mathrm{Emb}_{B}(C, M).
\end{equation*}
We will first prove that $T_\infty \mathrm{Emb}_{B}(C, M) \simeq *$. To do so it suffices to show that $T_1\mathrm{Emb}_{B}(C, M)$ and the homotopy fibres
$$L_k \mathrm{Emb}_{B}(C, M) := \mathrm{hofib}_{e}(T_k\mathrm{Emb}_{B}(C, M) \to T_{k-1}\mathrm{Emb}_{B}(C, M)),$$
taken at any $e \in T_{k-1}\mathrm{Emb}_{B}(C, M)$ whose underlying map $T_0e : C \to M$ is over $B\pi$, are all contractible. 

In order to do so, we will repeatedly use the following lemma.  The first part of the lemma is formally a special case of the second, but we prefer to state and prove it separately.  It can be regarded as a higher-categorical universal property of acyclic maps (cf.\ \cite[Lemma 3.4]{RaptisAcyclic}).

\begin{proposition}\label{prop:UPAcyclic}\mbox{}
\begin{enumerate}[(i)]
\item\label{it:UPAcyclic1} If $X \hookrightarrow Y$ is a cofibration which is acyclic, then for any map $g : X \to Z$ the space $\mathrm{map}_X(Y, Z)$ of maps $f : Y \to Z$ extending $g$ is either empty or contractible; it is non-empty if and only if $g_* : \pi_1(X, x_0) \to \pi_1(Z, g(x_0))$ factors over $\pi_1(X, x_0) \to \pi_1(Y, x_0)$ for each basepoint $x_0 \in X$.
\item\label{it:UPAcyclic2} If $X \hookrightarrow Y$ is a cofibration which is acyclic, $p : E \to Y$ is a fibration, and $g : X \to E$ is a section of $p$ over $X$, then the space $\Gamma_X(p : E \to Y)$ of sections $f : Y \to E$ of $p$ extending $g$ is either empty or contractible; it is non-empty if and only if $g_* : \pi_1(X, x_0) \to \pi_1(E, g(x_0))$ factors over $\pi_1(X, x_0) \to \pi_1(Y, x_0)$ for each basepoint $x_0 \in X$.
\end{enumerate}
\end{proposition}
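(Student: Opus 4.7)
Part (i) is the special case of (ii) obtained by taking $p$ to be the trivial projection $Y \times Z \to Y$, since sections of this fibration extending a given section over $X$ are the same data as maps $Y \to Z$ extending $g$. Hence it suffices to prove (ii). The necessity of the $\pi_1$ factorization condition is immediate from functoriality: any section $f$ extending $g$ gives $g_* = f_* \circ (i_X)_*$, which factors through $(i_X)_*$.

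For existence I will use obstruction theory. After CW approximation I may assume $X \hookrightarrow Y$ is a relative CW inclusion. Acyclicity of $X \to Y$, together with the Hurewicz theorem applied to the homotopy fibre, forces $\pi_1(X) \to \pi_1(Y)$ to be surjective with perfect kernel $P$, so the relative CW structure can be arranged so that every cell has dimension $\geq 2$: attach 2-cells along normal generators of $P$, then higher cells to fill in the homological data. Now extend the given section $g$ over the relative skeleta one cell at a time. The obstruction to extending across a 2-cell attached along $\gamma \in P$ lies in $\pi_1(F_y)$ for the fibre $F_y$ of $p$, and vanishes exactly under the $\pi_1$ factorization hypothesis, since that hypothesis forces $g_*(\gamma)$ to be null in $\pi_1(E)$. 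For $n \geq 3$ the obstruction lives in $H^n(Y, X ; \{\pi_{n-1}(F)\})$ with local coefficients. By acyclicity, the relative cellular chain complex $C_*(\tilde Y, \tilde X)$ of universal covers is an acyclic complex of free $\bZ[\pi_1(Y)]$-modules, hence chain contractible; therefore $\Hom_{\bZ[\pi_1(Y)]}(C_*(\tilde Y, \tilde X), M)$ is cohomologically trivial for any coefficient system $M$, and all higher obstructions vanish automatically.

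To upgrade existence to contractibility, I compute $\pi_n(\Gamma_X(p), f_0)$ for each $n \geq 0$ by applying the same obstruction argument to the pair $(Y \times D^n,\, Y \times \partial D^n \cup X \times D^n)$, with the section $f_0$ (constant in the $D^n$-parameter on $Y \times \partial D^n$) and $g$ (constant on $X \times D^n$) as the boundary data. A Künneth-type computation shows the new pair has the same fundamental group $\pi_1(Y)$ as the target, and its relative chain complex is $C_*(\tilde Y, \tilde X) \otimes C_*(D^n, \partial D^n)$, which remains acyclic; the $\pi_1$ factorization hypothesis on the enlarged boundary section is automatic because it extends $f_0$. Hence every $\pi_n$ vanishes, proving contractibility.

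The main obstacle I anticipate is the non-abelian bookkeeping at dimension $2$: translating between the intrinsic obstruction (an element of $\pi_1(F_y)$) and the factorization hypothesis on $g_*$ must be done carefully, after which the vanishing in dimensions $\geq 3$ falls out formally from chain-level acyclicity. Alternatively, the statement is essentially the universal property of acyclic maps in the sense of \cite{RaptisAcyclic}, and could be deduced from results developed there.
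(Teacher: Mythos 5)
Your approach is genuinely different from the paper's.  Where you run a direct cell-by-cell obstruction argument (CW approximation, cells of dimension $\geq 2$, obstructions in $\pi_{n-1}$ of the fibre, higher vanishing by chain-contractibility of $C_*(\tilde Y, \tilde X)$), the paper instead (a) handles $\pi_0$ of the mapping space by citing the classical universal property of acyclic maps [Hausmann--Husemoller, Prop.\ 3.1], (b) kills the higher homotopy groups using the Moore--Postnikov tower of $Z \to B\pi_1 Z$, which yields a spectral sequence with $E^2_{s,t} = H^s(Y,X;\underline{\pi_t Z})$ that vanishes by acyclicity, and (c) deduces (ii) from (i) via the fibre sequence $\Gamma_X(p) \to \mathrm{map}_X(Y,E) \to \mathrm{map}_X(Y,Y)$.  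Your chain-contractibility argument in degrees $\geq 3$ is sound and is essentially equivalent in content to the paper's spectral-sequence step; your device of using the pair $(Y \times D^n,\; Y \times \partial D^n \cup X \times D^n)$ to compute the higher homotopy groups of the section space also works (modulo an off-by-one in the indexing: to kill $\pi_n$ you should use the $D^{n+1}$-pair).

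However, there is a real gap at the place you yourself flag, and the ``since'' clause you offer there is not a proof.  You argue that the obstruction to extending across a 2-cell attached along $\gamma$ lies in $\pi_1(F_y)$ and ``vanishes \ldots since that hypothesis forces $g_*(\gamma)$ to be null in $\pi_1(E)$.''  But the map $\pi_1(F_y) \to \pi_1(E)$ is not injective in general: its kernel is the image of the connecting homomorphism $\pi_2(Y) \to \pi_1(F_y)$, which need not be zero (acyclicity of $X \hookrightarrow Y$ controls $H_2(Y,X)$ but not $\pi_2 Y$).  So knowing $g_*(\gamma) = 1$ in $\pi_1(E)$ only places the obstruction class in $\ker(\pi_1(F_y) \to \pi_1(E))$, not at the identity.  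The conclusion that it vanishes is nevertheless true, but showing this requires an extra idea.  Two standard fixes: (1) first prove part (i), where $E = Y \times Z$, $F_y = Z$, and $\pi_1(Z) \to \pi_1(Y) \times \pi_1(Z)$ \emph{is} split injective, so the naive obstruction argument does go through; then deduce (ii) from (i) by the fibre sequence $\Gamma_X(p) \to \mathrm{map}_X(Y,E) \to \mathrm{map}_X(Y,Y)$, applying (i) to both the base (over $g = i_X$, where the $\pi_1$ condition is automatic) and the total space --- this is exactly the paper's route, and note that it reduces (ii) to (i), the opposite of your reduction; or (2) for part (ii) directly: first extend $g$ to a \emph{map} $f : Y \to E$ (this only needs the $\pi_1$ condition in $\pi_1(E)$, which you have), observe that $p \circ f$ and $\id_Y$ are both extensions of $i_X : X \to Y$ so are homotopic rel $X$ by uniqueness, and then use the homotopy lifting property of $p$ against the trivial cofibration $X \times I \cup Y \times \{0\} \hookrightarrow Y \times I$ to straighten $f$ into a genuine section extending $g$.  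Either repair closes the gap; as written, the 2-cell step is not justified.
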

\begin{proof}
For part (i), we may suppose that $X$, $Y$, and $Z$ are path-connected. Using the given assumption on fundamental groups, the classical universal property of acyclic maps \cite[Proposition 3.1]{HausmannHusemoller} says that the space $\mathrm{map}_X(Y, Z)$ is non-empty and path-connected.

Modelling the Postnikov truncation $Z \to B\pi_1(Z,g(x_0))$ as a fibration, note that the map $\mathrm{map}_{X, /B\pi_1(Z,g(x_0))}(Y, Z) \overset{\sim}\to \mathrm{map}_{X}(Y, Z)$ from the space of maps over $B\pi_1(Z,g(x_0))$ is an equivalence, because $\mathrm{map}_{X}(Y, B\pi_1(Z,g(x_0)))$ is easily seen to be contractible (again using the assumption on fundamental groups). The Moore--Postnikov tower of the map $Z \to B\pi_1(Z,g(x_0))$ yields a spectral sequence
$$E^2_{s,t} = H^s(Y, X ; \underline{\pi_t(Z,g(x_0))}) \Rightarrow \pi_{t-s}(\mathrm{map}_{X, /B\pi_1(Z,g(x_0))}(Y, Z), f), \quad t > 1,$$
whose $E^2$-page is identically zero as $X \hookrightarrow Y$ is acyclic. This shows that the space $\mathrm{map}_{X, /B\pi_1(Z,g(x_0))}(Y, Z) \simeq \mathrm{map}_{X}(Y, Z)$ is contractible.

For part (ii), there is a homotopy fibre sequence
$$\Gamma_X(p : E \to Y) \lra \mathrm{map}_X(Y, E) \lra \mathrm{map}_X(Y, Y),$$
where the fibre is taken over the identity map of $Y$. An application of part (i) shows that the base is contractible, and a further application of part (i) shows that the total space is contractible under the given assumption on fundamental groups. Hence $\Gamma_X(p : E \to Y)$ is contractible under this assumption.
\end{proof}

\begin{lemma}
$T_1\mathrm{Emb}_{B}(C, M)$ is contractible.
\end{lemma}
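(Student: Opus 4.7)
The plan is to realize $T_1 \mathrm{Emb}_B(C, M)$ as a space of sections over $C$ extending given data over $B$, and then to apply Proposition~\ref{prop:UPAcyclic}(\ref{it:UPAcyclic2}), which is exactly tailored to the acyclic cofibration $B \hookrightarrow C$. By the standard identification of the first stage of embedding calculus with immersions (a relative form of Smale--Hirsch), $T_1 \mathrm{Emb}_B(C, M)$ should be weakly equivalent to the space of sections $\Gamma_B(p : E \to C)$ extending the canonical section $g : B \to E$ determined by $e_0$ and its derivative. Here the fiber of $p$ over $c \in C$ is the space of pairs $(m, \phi)$ with $m \in M$ and $\phi : T_c C \hookrightarrow T_m M$ a linear injection; besides $p$, there is a natural projection $E \to M$ remembering $m$, and the two projections assemble into a fibration $E \to C \times M$ with fiber $\mathrm{GL}(d) \simeq \mathrm{O}(d)$.

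To invoke Proposition~\ref{prop:UPAcyclic}(\ref{it:UPAcyclic2}) I would check that $g_* : \pi_1(B, b) \to \pi_1(E, g(b))$ factors through $\pi_1(B) \to \pi_1(C, b)$. Writing $P$ for the kernel of $(f_B)_* : \pi_1(B, b) \twoheadrightarrow \pi$, which is perfect as recalled in the introduction, and observing that $\pi_1(C, b) \to \pi$ is an isomorphism because $f : C \to B\pi$ is $2$-connected, $P$ is also the kernel of $\pi_1(B) \to \pi_1(C)$. The required factoring is therefore equivalent to triviality of the composite $P \hookrightarrow \pi_1(B) \to \pi_1(E)$.

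I would verify this by testing against the three quotients of $\pi_1(E)$ detected by the projections $E \to C$, $E \to M$, and the fiber $\mathrm{O}(d)$ of $E \to C \times M$. The image of $P$ in $\pi_1(C)$ is trivial by definition; the image of $P$ in $\pi_1(M)$ agrees with the restriction to $P$ of $\pi_1(B) \to \pi_1(\partial M) \to \pi_1(M)$, which by the hypothesis of Theorem~\ref{thm:B} factors through $\pi = \pi_1(C)$ and so vanishes on $P$; the residual contribution then lies in the image of $\pi_1(\mathrm{O}(d))$, which is abelian, and vanishes because $P$ is perfect. Proposition~\ref{prop:UPAcyclic}(\ref{it:UPAcyclic2}) then yields $T_1 \mathrm{Emb}_B(C, M) \simeq \Gamma_B(p) \simeq *$. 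The main hurdle is the Smale--Hirsch-type identification of $T_1 \mathrm{Emb}_B(C, M)$ with this section space in the codimension-zero, relative-to-$B$ setting; once that is in hand, the combination of $\pi_1(C) \cong \pi$, the fundamental-group hypothesis of Theorem~\ref{thm:B}, and perfectness of $P$ dispatches the three pieces of the factoring condition cleanly.
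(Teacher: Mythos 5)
Your proposal is correct and follows essentially the same route as the paper's proof: identify $T_1\mathrm{Emb}_B(C,M)$ with the section space $\Gamma_B(\mathrm{Iso}(TC,TM)\to C)$, apply Proposition~\ref{prop:UPAcyclic}(\ref{it:UPAcyclic2}), and show $g_*(P)$ is trivial by projecting to $\pi_1(C)$ and $\pi_1(M)$ and noting the residual contribution lies in the image of the abelian group $\pi_1(\mathrm{GL}_d(\bR))\cong\bZ/2$, which must vanish since $P$ is perfect. One cosmetic remark: the appeal to Smale--Hirsch is superfluous, since $T_1\mathrm{Emb}_B(C,M)$ is already by construction the space of bundle maps (formal immersions) $TC\to TM$ relative to $B$, without any passage to honest immersions.
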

\begin{proof}
  $T_1\mathrm{Emb}_{B}(C, M)$ is equivalent to the space of vector bundle maps $TC\to TM$, i.e., continuous maps $C \to M$ covered by fibrewise linear isomorphisms, which are the identity over $B$. This may be described as sections of the bundle $p : \mathrm{Iso}(TC, TM) \to C$ whose fibre over $c \in C$ is the space of linear isomorphisms from $T_c C$ to some $T_m M$, which extend a given section $g: B \to \mathrm{Iso}(TC, TM)$, i.e.\ as $\Gamma_B(p : \mathrm{Iso}(TC, TM) \to C)$. By Proposition \ref{prop:UPAcyclic} (\ref{it:UPAcyclic2}) this space is contractible as long as $g_*: \pi_1(B, b) \to \pi_1(\mathrm{Iso}(TC, TM), g(b))$ factors over the quotient $\pi_1(B, b) \to \pi_1(C, b) = \pi$. The kernel of the latter is a perfect normal subgroup $P \lhd \pi_1(B, b)$. In the diagram
\begin{equation*}
  \begin{tikzcd}
& \pi_1(B, b) \dar{g_*} \\
\pi_1(\mathrm{GL}_d(\bR), \mathrm{id})\rar & \pi_1(\mathrm{Iso}(TC, TM), g(b)) \rar{p_*} & \pi_1(M, b),
  \end{tikzcd}
\end{equation*}
the assumption on fundamental groups in Theorem \ref{thm:B} says that $P$ is in the kernel of $p_* \circ g_*$, so $g_*(P)$ lies in the kernel of $p_*$. But this kernel is a quotient of $\pi_1(\mathrm{GL}_d(\bR), \mathrm{id}) \cong \bZ/2$ so an abelian group,
and $g_*(P)$ is perfect and so must be trivial. Thus $g_*$ does indeed factor over $\pi_1(B, b) \to \pi_1(C, b) = \pi$.
\end{proof}

For later use, we point out a forgetful map
\begin{equation}\label{eq:ImmToMaps}
T_1\mathrm{Emb}_{B}(C, M) \lra \mathrm{map}_B(C, M),
\end{equation}
which in the notation of the above proof is given by forgetting the fibrewise linear isomorphisms.

\begin{lemma}
$L_k \mathrm{Emb}_{B}(C, M)$ is contractible for each $k \geq 2$.
\end{lemma}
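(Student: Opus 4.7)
The plan is to mirror the contractibility argument for $T_1$: realise $L_k \mathrm{Emb}_B(C, M)$ as a space of sections extending a prescribed section over an acyclic cofibration, and then invoke Proposition~\ref{prop:UPAcyclic}(ii).

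The standard Goodwillie--Weiss analysis of the Taylor tower identifies $L_k \mathrm{Emb}_B(C, M)$, at the basepoint $e$ (with underlying map over $B\pi$), with a space of $\Sigma_k$-equivariant sections of a bundle $p_k : E_k \to F_k(C)$ over the ordered configuration space of $k$ distinct points in $C$, relative to a prescribed section over $F_k(B) \subseteq F_k(C)$ encoding the boundary data. The fibre of $p_k$ over a configuration is the total homotopy fibre of a $k$-cube of embedding spaces of small $d$-discs in $M$; for $k \geq 2$ and $d \geq 6$ this fibre is simply connected, and in particular has abelian $\pi_1$.

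To apply Proposition~\ref{prop:UPAcyclic}(ii) we must verify (a) $F_k(B) \hookrightarrow F_k(C)$ is an acyclic cofibration, and (b) the $\pi_1$-factorisation at the basepoint section. For (a), I would induct on $k$: the case $k=1$ is the given acyclicity of $B \hookrightarrow C$. The inductive step uses the Fadell--Neuwirth fibration $F_k \to F_{k-1}$, whose fibre over $S \in F_{k-1}(B)$ is the inclusion $B \setminus S \hookrightarrow C \setminus S$; this remains acyclic, by excision for the pair $(C,B)$ (removing interior points of $B$ does not affect the relative homology) and by invariance of $\pi_1$ under puncturing in dimension $d \geq 3$. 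A Serre spectral sequence argument with local coefficients then propagates acyclicity of base and fibre to the total space map. For (b), the kernel of $\pi_1(F_k(B)) \to \pi_1(F_k(C))$ is generated by contributions from the perfect subgroup $P = \ker(\pi_1(B,b) \to \pi)$ along each coordinate; since the fibre of $p_k$ has abelian $\pi_1$, the image of $P$ must vanish, giving the factorisation exactly as in the $T_1$-argument.

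The main obstacle will be rigorously propagating acyclicity through the Fadell--Neuwirth tower, and carefully setting up the Goodwillie--Weiss layer description in this relative setting (in particular, matching the prescribed boundary data with a section over $F_k(B)$, up to a weak equivalence of section-space models). Once these technicalities are discharged, the final application of Proposition~\ref{prop:UPAcyclic}(ii) is formal.
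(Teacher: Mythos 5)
The overall plan (identify $L_k\mathrm{Emb}_B(C,M)$ as a relative section space and apply Proposition~\ref{prop:UPAcyclic}(\ref{it:UPAcyclic2})) is the same as the paper's, but there is a genuine gap in your description of the relative part, and it changes the acyclicity statement you need to prove.

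In the Goodwillie--Weiss description of the layer (cf.\ \cite[Sections 9 and 10]{EmbImmI}), the sections of $\pi_k : E_k \to C_k(C)$ are prescribed not just over configurations lying in $B$, but over a whole neighbourhood in the configuration space of the locus of \emph{near-collisions} as well. In the paper's notation the prescription is over $\nabla_\partial$, the intersection with $\tilde C_k(C)$ of a regular neighbourhood of
\[
D_k = \{(x_1,\dots,x_k) \in C^k \mid x_i=x_j \text{ for some } i\neq j, \text{ or } x_i \in B \text{ for some } i\},
\]
which includes the fat diagonal. This compact-support condition near the diagonal is what makes the section space compute the fibre of $T_k\to T_{k-1}$ rather than something larger; dropping it and only prescribing over $F_k(B)$ describes a different (and not directly relevant) space. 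Consequently, the acyclicity statement you would need is \emph{not} that $F_k(B)\hookrightarrow F_k(C)$ is acyclic, but that $D_k\hookrightarrow C^k$ is acyclic (and, via excision and the $(d-1)$-connectivity of $\tilde C_k(C)\hookrightarrow C^k$, that $\nabla_\partial\hookrightarrow C_k(C)$ is). The fat-diagonal contribution is exactly what makes this nontrivial, and the paper handles it with a two-step filtration argument (Lemma~\ref{lem:acyclic}) quite different in character from a Fadell--Neuwirth induction: first filtering $C^k$ by how many coordinates lie outside $B$, then by the cardinality of the configuration, with explicit pushout squares at each step. Your Fadell--Neuwirth argument, while plausibly correct for the statement you pose (that $F_k(B)\to F_k(C)$ is acyclic), does not touch the diagonal part and so does not prove the acyclicity that is actually needed.

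A secondary, more minor gap: you assert that the total homotopy fibre of the $k$-cube of embedding spaces is simply connected for $k\geq 2$ and $d\geq 6$, but give no argument. The paper proves the stronger fact that these total homotopy fibres are $(d-2)$-connected, by an induction over $k$ using the fibration relating the $k$-cube to two $(k-1)$-cubes (the one in $M$ and the one in $M\setminus\{*\}$) together with the splitting $M\setminus\{*\}\simeq M\vee S^{d-1}$ coming from $M$ having nonempty boundary. That argument would need to be supplied.
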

\begin{proof}
  As in \cite[Sections 9 and 10]{EmbImmI} the higher layers $L_k \mathrm{Emb}_{B}(C, M)$ in the embedding calculus tower can be described as spaces of sections of certain fibrations
$$\pi_k : E_k \lra C_k(C),$$
where these sections are prescribed near the diagonals, and near the locus where some configuration point lies in $B \subset C$. More precisely, let $\tilde{C}_k(C) \subset C^k$ be the ordered configuration space of $k$ points in $C$, so that $C_k(C) = \tilde{C}_k(C)/\mathfrak{S}_k$, let
\begin{equation}
D_k := \{(x_1, \ldots, x_k) \in C^k \, | \, x_i = x_j \text{ for some } i \neq j, \text{ or } x_i \in B \text{ for some } i\},\label{eq:1}
\end{equation}
and let $U \subset C^k$ be a $\mathfrak{S}_k$-invariant regular neighbourhood of $D_k$. Then write $\tilde{\nabla}_\partial = \tilde{C}_k(C) \cap U \subset \tilde{C}_k(C)$ and  $\nabla_\partial := \tilde{\nabla}_\partial/\mathfrak{S}_k \subset \tilde{C}_k(C)/\mathfrak{S}_k = C_k(C)$. Then the space in question will be the space $\Gamma_{\nabla_\partial}(\pi_k : E_k \to C_k(C))$ of sections of $\pi_k$ which agree with a prescribed section on $\nabla_\partial$.  

We wish to apply Proposition \ref{prop:UPAcyclic} (\ref{it:UPAcyclic2}) to this space of sections. In Lemma \ref{lem:acyclic} below we will show that the inclusion $D_k \to C^k$ is acyclic, so $U \to C^k$ is too. As the inclusion $\tilde{C}_k(C) \subset C^k$ is $(d-1)$-connected, and so in particular a $\pi_1$-isomorphism, it follows by excision that the inclusion $\tilde{\nabla}_\partial \to \tilde{C}_k(C)$ is acyclic. Taking (homotopy) orbits of both spaces by $\mathfrak{S}_k$ does not change the homotopy fibres, so the inclusion ${\nabla}_\partial \to {C}_k(C)$ is also acyclic.

To apply Proposition \ref{prop:UPAcyclic} (\ref{it:UPAcyclic2}) it just remains to show that the map $g_* : \pi_1({\nabla}_\partial) \to \pi_1(E_k)$ factors over $\pi_1({\nabla}_\partial) \to \pi_1(C_k(C))$, with any basepoint. To see this it will suffice to show that $(\pi_k)_* : \pi_1(E_k) \to \pi_1(C_k(C))$ is an isomorphism, with any basepoint, so it suffices to show that the fibres of $\pi_k : E_k \to C_k(C)$ are 1-connected: we will show that they are in fact $(d-2)$-connected.

The fibre of $\pi_k : E_k \to C_k(C)$ over a configuration $\xi$ is given by the total homotopy fibre $F_\xi := \tohofib_{S \subset \xi} \mathrm{Emb}(-,M)$ of the $k$-cube
$$(S \subset \xi) \longmapsto \mathrm{Emb}(S,M).$$
To form the total homotopy fibre, we must be given compatible basepoints at each corner of the cube except the initial one: these are given by applying the ``partial embedding'' $e \in T_{k-1} \mathrm{Emb}_{B}(C, M)$ to the proper subsets of $\xi$. We show that the total homotopy fibres $F_\xi$ are $(d-2)$-connected by induction over $k$. For $k=2$ there is a diagram of fibration sequences
\begin{equation*}
  \begin{tikzcd}
F_{\{1,2\}} \rar &  M \setminus \{*\} \dar \rar & M \dar\\
 & \mathrm{Emb}(\{1,2\},M) \rar \dar & \mathrm{Emb}(\{1\},M) \dar\\
 & \mathrm{Emb}(\{2\},M) \rar & *
  \end{tikzcd}
\end{equation*}
and as $M$ has non-empty boundary, the inclusion $M \setminus \{*\} \to M$ is split up to isotopy. As $M \setminus \{*\} \simeq M \vee S^{d-1}$, the total homotopy fibre $F_{\{1,2\}}$ is $(d-2)$-connected.  More generally \cite[Proposition 5.5.4]{munsonvolic}, a $k$-cube can be regarded as a map of $(k-1)$-cubes, and the total homotopy fibre of the $k$-cube can be computed as the homotopy fibre of the induced maps of total homotopy fibres of the $(k-1)$-cubes.  In our situation, this gives a fibration sequence
$$\tohofib_{\mathclap{S \subset \{1,2,\ldots,k\}}} \mathrm{Emb}(-,M) \lra \tohofib_{\mathclap{S \subset \{1,2,\ldots,k-1\}}} \mathrm{Emb}(-,M\setminus \{*\}) \lra \tohofib_{\mathclap{S \subset \{1,2,\ldots,k-1\}}} \mathrm{Emb}(-,M)$$
which has a section, so the left-hand term is at least as connected as the middle term: by induction over $k$ is is therefore $(d-2)$-connected.
\end{proof}

\begin{lemma}\label{lem:acyclic}
Let $C$ be a Hausdorff topological space and $B \subset C$ a closed subspace such that the inclusion $B \hookrightarrow C$ is an acyclic map, and define $D_k \subset C^k$ by the formula~\eqref{eq:1}.  Then the inclusion $D_k \hookrightarrow C^k$ is acyclic.
\end{lemma}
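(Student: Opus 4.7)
The plan is to induct on $k$. The base case $k = 1$ is exactly the hypothesis that $B \hookrightarrow C$ is acyclic. The inductive step rests on the standard closure properties of the class of acyclic maps: closure under composition and two-out-of-three (via the fibration $\mathrm{hofib}(f) \to \mathrm{hofib}(g f) \to \mathrm{hofib}(g)$), closure under products with the identity, closure under cobase change along cofibrations (via excision on relative homology with arbitrary local coefficients), and the gluing lemma for pushout squares: in a map of homotopy pushout squares of cofibrations, if three of the four corner maps are acyclic then so is the fourth (from the five-lemma applied to the Mayer--Vietoris long exact sequence with arbitrary local coefficients).

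The key decomposition is
\[
  D_k = X \cup \Delta^{(k)}, \qquad X := (D_{k-1} \times C) \cup (C^{k-1} \times B), \qquad \Delta^{(k)} := \bigcup_{i<k}\{x_i = x_k\},
\]
and I would argue in three steps. First, $X \hookrightarrow C^k$ is acyclic: $D_{k-1}\times C \hookrightarrow C^k$ is acyclic by the inductive hypothesis applied to $D_{k-1} \hookrightarrow C^{k-1}$ together with product stability; $C^{k-1}\times B \hookrightarrow C^k$ is acyclic by product stability applied to $B \hookrightarrow C$; and their intersection $D_{k-1} \times B$ is acyclically included for the same reasons. Combining via cobase change along the pushout $X = (D_{k-1}\times C) \cup_{D_{k-1} \times B} (C^{k-1} \times B)$ and two-out-of-three gives the claim.

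Second and most technical, $X \cap \Delta^{(k)} \hookrightarrow \Delta^{(k)}$ is acyclic. Identifying each $\{x_i = x_k\} \cong C^{k-1}$ by forgetting $x_k$, the subspace $X \cap \{x_i = x_k\}$ becomes exactly $D_{k-1} \subset C^{k-1}$ (the $B$-condition from $C^{k-1}\times B$ is subsumed by $D_{k-1}$). I would induct inwardly on $m$ that $X \cap \Delta^{(k),\leq m} \hookrightarrow \Delta^{(k),\leq m}$ is acyclic, where $\Delta^{(k),\leq m} := \bigcup_{i\leq m}\{x_i = x_k\}$. The base $m=1$ is the outer inductive hypothesis $D_{k-1} \hookrightarrow C^{k-1}$. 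For $m \to m+1$, I compare the pushout defining $\Delta^{(k),\leq m+1} = \Delta^{(k),\leq m} \cup \{x_{m+1} = x_k\}$ with its $X \cap (-)$ analogue. The crucial point is that on the intersection $\Delta^{(k),\leq m} \cap \{x_{m+1} = x_k\}$, two coordinates of $(x_1,\ldots,x_{k-1})$ are forced to agree, so the tuple automatically lies in $D_{k-1}$ and hence in $X$; consequently the top-left comparison map is the identity, the top-right is the acyclic inclusion $D_{k-1} \hookrightarrow C^{k-1}$, and the bottom-left is acyclic by the inner induction, so the gluing lemma produces the bottom-right.

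To finish, the pushout $D_k = X \cup_{X \cap \Delta^{(k)}} \Delta^{(k)}$ together with the second step and cobase change give $X \hookrightarrow D_k$ acyclic, and two-out-of-three combined with the first step concludes that $D_k \hookrightarrow C^k$ is acyclic. I expect the principal subtleties to lie in (a) verifying the gluing lemma in the required generality, and (b) ensuring that the closed covers of $C^k$, $\Delta^{(k)}$ and $\Delta^{(k),\leq m}$ give genuine homotopy pushout squares; the first follows from a standard five-lemma argument on Mayer--Vietoris with arbitrary local coefficients, and the second is unproblematic in the manifold setting of the intended application.
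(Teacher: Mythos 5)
Your proposal is correct, and it takes a genuinely different route from the paper. You decompose $D_k$ by pulling out the last coordinate: $D_k = X \cup \Delta^{(k)}$ with $X = (D_{k-1}\times C) \cup (C^{k-1}\times B)$ absorbing all the conditions involving only $x_1, \dots, x_{k-1}$ or only $x_k \in B$, and $\Delta^{(k)}$ absorbing the condition $x_i = x_k$. You then run a nested induction (on $k$ outside, on the number $m$ of diagonals inside) using cobase change, gluing, and the direction of two-out-of-three that holds unconditionally for acyclic maps (if $f$ and $gf$ are acyclic then $g$ is). The key observation that the triple-overlap $\Delta^{(k),\le m}\cap\{x_{m+1}=x_k\}$ is already contained in $X$ --- so the top-left comparison map is an identity --- is exactly what makes the inner induction close, and is correct. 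By contrast, the paper avoids singling out a coordinate and instead runs two successive \emph{global} filtrations of $C^k$: first $F_j C^k$ by the number of coordinates lying outside $B$ (pushouts indexed by $(k-j,j)$-shuffles), then $F'_j C^k$ by the number of distinct coordinates (pushouts indexed by surjections $\{1,\dots,k\}\to\{1,\dots,j\}$ up to $\mathfrak{S}_j$). The paper's filtrations keep the $\mathfrak{S}_k$-symmetry visible and make the combinatorial bookkeeping explicit, at the cost of carrying an auxiliary inductive claim (acyclicity of $F_{j-1}C^j \hookrightarrow C^j$) alongside the statement of the lemma itself. Regarding the subtlety you flag about whether the closed covers give honest homotopy pushouts / Mayer--Vietoris: the paper handles this in the stated generality (Hausdorff $C$, closed $B$) by defining acyclicity via \v{C}ech cohomology with local coefficients, as explained in the remark following the lemma; the same device would make your argument watertight without appealing to the manifold setting of the application.
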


\begin{remark}
The lemma is true in the stated generality, provided ``acyclic'' is taken in the sense of \v{C}ech cohomology: a map $f: X \to Y$ is acyclic when $f^*: \check{H}^*(Y;\mathcal{A}) \to \check{H}^*(X;f^* \mathcal{A})$ is an isomorphism for all local systems $\mathcal{A}$ on $Y$.  In our application, all spaces are locally contractible, $D_k$ because it is a retract of an open subset of the manifold $C^k$, so \v{C}ech cohomology agrees with singular cohomology and acyclicity agrees with the standard notion, in turn equivalent to homotopy fibres having the singular homology of a point.

Working in this generality is convenient for the proof, for example circumventing whether the pushout diagrams below are also homotopy pushouts.
\end{remark}

\begin{proof}[Proof of Lemma \ref{lem:acyclic}]
  We use induction on $k$, the case $k=1$ being tautological.  For the induction step, we use two filtrations.  Let $F_j C^k \subset C^k$ be the closed subspace consisting of those $y = (y_1, \dots, y_k)$ for which $\{i \mid y_i \in C \setminus B\}$ has cardinality at most $j$.  Then $F_0 C^k = B^k$, and for $j \geq 1$ there is a commutative diagram
  \begin{equation}\label{eq:3}
    \begin{tikzcd}
      \displaystyle\coprod_{\mathclap{\sigma \in \mathfrak{S}_{k-j,j}}} B^{k-j} \times F_{j-1}C^j \rar[hookrightarrow] \dar &
      \displaystyle\coprod_{\mathclap{\sigma \in \mathfrak{S}_{k-j,j}}} B^{k-j} \times C^j \dar[two heads] \\
      F_{j-1} C^k \rar & F_j C^k,
    \end{tikzcd}
  \end{equation}
  where $\mathfrak{S}_{k-j,j} \subset \mathfrak{S}_k$ denotes the set of $(k-j,j)$-shuffles and the left vertical map is defined by
  \begin{align*}
    B^{k-j} \times C^j & \lra C^k\\
    (y_1, \dots, y_{k-j}, y_{k-j+1}, \dots, y_k) & \longmapsto (y_{\sigma^{-1}(1)}, \dots, y_{\sigma^{-1}(k-j)}, y_{\sigma^{-1}(k-j+1)}, \dots, y_{\sigma^{-1}(k)}).
  \end{align*}
  A point $y \in F_j C^k \setminus F_{j-1} C^k$ is in the image of this injective map for precisely one $\sigma \in \mathfrak{S}_{k-j,j}$, from which we deduce that the square diagram of sets underlying diagram~(\ref{eq:3}) is pushout.  We claim that~(\ref{eq:3}) is pushout in topological spaces, too.  To see this, we consider the composition
  \begin{equation*}
    F_{j-1} C^k \amalg \coprod_{\mathfrak{S}_{k-j,j}} B^{k-j} \times C^j \hookrightarrow C^k \amalg \coprod_{\mathfrak{S}_{k-j,j}} C^k \lra C^k,
  \end{equation*}
  whose image is $F_j C^k$.  The composition is a closed map, since $\mathfrak{S}_{k-j,j}$ is finite, and therefore the quotient topology and the subspace topology on the image $F_j C^k \subset C^k$ agree.

  We now claim that the inclusion $F_{k-1} C^k \hookrightarrow C^k$ is acyclic.  By induction on $k$, we may assume that the inclusions $F_{j-1} C^j \to C^j$ are acyclic for all $j < k$.  It then follows from the pushout diagram that $F_{j-1} C^k \to F_j C^k$ is also an acyclic map.  In the diagram
  \begin{equation*}
    F_0 C^k \hookrightarrow F_1 C^k \hookrightarrow \dots \hookrightarrow F_{k-1} C^k \hookrightarrow C^k,
  \end{equation*}
  the composition is acyclic and all arrows except possibly the last one are acyclic.  Then the last inclusion $F_{k-1} C^k \hookrightarrow C^k$ must be acyclic too, completing the induction step.

  Next we consider a filtration $F'$, defined by 
  \begin{equation*}
    F'_j C^k = F_{k-1} C^k \cup \{y \in C^k \mid \text{the cardinality of $\{y_1, \dots, y_k\}$ is at most $j$}\}.
  \end{equation*}
  In this case we have $F'_0 C^k = F_{k-1} C^k$ and pushout diagrams
  \begin{equation*}
    \begin{tikzcd}
      \displaystyle\coprod_\theta  F'_{j-1} C^j \rar[hookrightarrow]\dar &
      \displaystyle\coprod_\theta C^j\dar[two heads]\\
      F'_{j-1} C^k \rar & F'_j C^k,
    \end{tikzcd}
  \end{equation*}
  where the coproduct is indexed by surjections $\theta: \{1, \dots, k\} \to \{1, \dots, j\}$, one in each $\mathfrak{S}_j$-orbit, and the vertical maps are defined by
  \begin{equation*}
    (y_1, \dots, y_j) \longmapsto (y_{\theta(1)}, \dots, y_{\theta(j)}).
  \end{equation*}
  The pushout diagram expresses that a point $y \in F'_j C^k \setminus F'_{j-1} C^k$ is in the image of this injective map for some $\theta$, uniquely so up to permuting coordinates in the domain (viz., choose an arbitrary bijection $\{y_1, \dots, y_k\} \to \{1, \dots, j\}$ and compose with the surjection $i \mapsto y_i$).

By induction on $k$, the top horizontal map is acyclic for all $j < k$, so we deduce that $F'_{j-1} C^k \to F'_j C^k$ is also acyclic.  Since $F'_0 C^k \to C^k$ is acyclic, we deduce that the inclusion  $D_k = F'_{k-1} C^k \to C^k$ is acyclic, as claimed.
\end{proof}

The discussion so far assumed only that $B \to C$ is acyclic and showed that $T_\infty \mathrm{Emb}_B(C, M) \simeq *$. We now claim that when in addition $B' \to C$ is a homotopy equivalence, embedding calculus converges, i.e.\ the map $\mathrm{Emb}_B(C, M) \to T_\infty \mathrm{Emb}_B(C, M)$ is an equivalence, which proves Theorem \ref{thm:B}. By \cite[Corollary 2.5]{EmbImmII}, for convergence it suffices to show that $C$ admits a handle structure relative to $B$ with all handles of index $\leq d-3$, or equivalently that $C$ admits a handle structure relative to $B'$ with all handles of index $\geq 3$. Using that $d \geq 6$ this follows from the proof of the $h$-cobordism theorem: starting from any handle structure on $C$ relative to $B'$, eliminate low-dimensional handles using that $B' \to C$ is an equivalence.

\begin{remark}\label{rem:handles}
For a one-sided $h$-cobordism $C : B \leadsto B'$ of dimension $d \geq 6$, the proof of the $h$-cobordism theorem will succeed in eliminating handles of index $\leq d-3$ relative to $B'$, and so shows that $C$ admits a handle structure relative to $B$ only having handles of index $\leq 3$. On the other hand, as $B \to C$ is 1-connected it is simple to remove any 0- and 1-handles, at the expense of perhaps creating more 2-handles. Thus any one-sided $h$-cobordism $C$ may be obtained from $B$ up to diffeomorphism by attaching only 2- and 3-handles.
\end{remark}

\section{Second proof of Theorem \ref{thm:B}: Resolution by 2-handles}

Before explaining the strategy, we discuss a certain standard form of one-sided $h$-cobordisms $C : B \leadsto B'$ dimension $d \geq 6$, and use it to reduce to a special case.

\begin{lemma}
If $B$ has dimension $d-1 \geq 5$ then there exists a closed $(d-2)$-manifold $Y$, an embedding $[-\infty,\infty] \times Y \hookrightarrow B$, and a handle structure on $B$ relative to $[-\infty,\infty] \times Y$ with handles of index $\geq 3$ only; in particular the embedding is 2-connected.
\end{lemma}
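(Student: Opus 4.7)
The plan is to produce $Y$ as the ``middle level'' of a handle decomposition of $B$, and then verify the required handle structure on $B$ relative to $W := [-\infty,\infty] \times Y$ by a standard handle-duality argument on one side.

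First I would fix a smooth handle decomposition of $B$ relative to a collar $\partial B \times [0,1]$ of the boundary (absolute if $\partial B = \emptyset$, adding $0$-handles for any closed components). Let $N_- \subset B$ be the codimension-$0$ submanifold consisting of the collar together with all handles of index $\leq 2$. Its boundary decomposes as $\partial N_- = \partial B \sqcup Y$, where $Y$, the ``upper'' boundary of $N_-$, is by construction a closed $(d-2)$-manifold. Since $Y$ is two-sided in $B$ (as a boundary component of the codimension-$0$ submanifold $N_-$), it admits a bicollar, which after reparametrisation gives the required embedding $W = [-\infty,\infty]\times Y \hookrightarrow B$ with $[-\infty,0]\times Y \subset N_-$ and $[0,\infty]\times Y \subset B\setminus\mathrm{int}(N_-)$.

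Next I would verify the handle structure by analysing the two pieces of $B\setminus\mathrm{int}(W)$ separately.  The ``outer'' piece, attached to $\{+\infty\}\times Y$, is (a slight shrinkage of) $B \setminus \mathrm{int}(N_-)$ and inherits from the original decomposition a handle structure whose handles have index $\geq 3$.  The ``inner'' piece, attached to $\{-\infty\}\times Y$, is (a slight shrinkage of) $N_-$ viewed as a cobordism $Y\leadsto\partial B$; by reversing the direction of the cobordism, each original index-$k$ handle (with $k\leq 2$, attached from $\partial B$) becomes a handle of index $(d-1)-k \geq d-3\geq 3$ attached from $Y$, using the hypothesis $d\geq 6$.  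Assembling the two sides produces the desired handle structure on $B$ relative to $W$ with handles of index $\geq 3$ only.  The $2$-connectedness of $W\hookrightarrow B$ is then automatic: attaching a handle of index $k\geq 3$ is homotopically the attachment of a cell of dimension $\geq 3$, which leaves $\pi_0$ and $\pi_1$ unchanged and at worst quotients $\pi_2$, so iterating gives the claim.

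I expect the main obstacle to be the handle-duality argument on the $-\infty$ side: one must check that reversing the cobordism direction of $N_-$ genuinely replaces each index-$\leq 2$ handle (originally attached from $\partial B$) by an index-$\geq d-3$ handle attached from $Y$, with $\partial B$ re-emerging as the free terminal boundary after all of those handles have been attached. This is the standard Morse-theoretic duality — the index of a critical point of $f$ on a $(d-1)$-dimensional cobordism becomes $(d-1)-\mathrm{ind}$ for $-f$ — but it merits a careful statement so that the combined handle attachment really reconstructs $B$ with its original boundary.
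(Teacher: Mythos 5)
Your proof is correct and uses essentially the same idea as the paper's: take the middle level $Y$ of a handle decomposition of $B$ separating handles of index $\leq 2$ from those of index $\geq 3$, and observe that both sides give handles of index $\geq 3$ relative to $Y$ after dualizing the low-index side (using $d \geq 6$ to get $(d-1)-k \geq 3$ whenever $k \leq 2$). The only cosmetic difference is the direction: the paper takes a self-indexing Morse function $f\colon B \to [0, d-\tfrac12]$ with $f^{-1}(d-\tfrac12) = \partial B$, so the low-index handles are on the capped-off interior side of $Y$, whereas you build outward from a collar of $\partial B$, so the low-index handles abut $\partial B$; both versions prove the lemma.
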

\begin{proof}
  Let us write $\delta = d-1$ for the dimension of $B$.  Let $f : B \to [0,\delta + \tfrac{1}{2}]$ be a proper and self-indexing Morse function with $f^{-1}(\delta+\tfrac{1}{2}) = \partial B$, and set $Y := f^{-1}(2 + \tfrac{1}{2})$. As $2 + \tfrac{1}{2}$ is a regular value of $f$, $Y$ is a closed $(\delta-1)$-manifold and has a neighbourhood diffeomorphic to $[-\infty,\infty] \times Y$. Now $B$ is obtained from $[-\infty,\infty] \times Y$ by handles of index $\geq \delta-2$ from below and handles of index $\delta \geq 3$ from above. As $\delta \geq 5$, it follows that $B$ is obtained up to homotopy from $[-\infty,\infty] \times Y$ by attaching cells of dimension $\geq 3$, so the inclusion $[-\infty, \infty] \times Y \hookrightarrow B$ is 2-connected.
\end{proof}

\begin{lemma}
  Let $[-\infty,\infty] \times Y \hookrightarrow B$ be an embedding as in the previous lemma, and write $B_2 \subset B$ for the image of $[-\infty,0] \times Y$.  Then there exists a cobordism $C_2 : B_2 \leadsto B'_2$ restricting to a trivial cobordism $\partial B_2 \leadsto \partial B'_2$ on the boundary, and a diffeomorphism
  \begin{equation}\label{eq:2}
   C \xrightarrow{\cong} C_2 \cup ([0,1] \times B \setminus \mathrm{Int}(B_2))
  \end{equation}
  relative to $B$.
\end{lemma}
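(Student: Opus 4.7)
By Remark~\ref{rem:handles}, we may take a handle decomposition of $C$ relative to $B$ with only $2$- and $3$-handles $H_2$ and $H_3$. The plan is to refine this so that all handles are attached inside $\mathrm{Int}(B_2) \subset B$; the desired diffeomorphism will then fall out directly from the resulting decomposition.

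First observe that $B_2 \hookrightarrow B$ is $2$-connected: both $B_2 = [-\infty,0] \times Y$ and $[-\infty,\infty] \times Y$ deformation retract onto $\{0\} \times Y$, and the previous lemma asserts that $[-\infty,\infty] \times Y \hookrightarrow B$ is $2$-connected. Moreover, that lemma equips $B$ with a handle decomposition over $[-\infty,\infty] \times Y$ having only handles of index $\geq 3$, whose cocores therefore have dimension at most $d-4$.

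The first technical step pushes the $2$-handles into $B_2$. For the attaching circle $S^1 \subset B$ of a $2$-handle, transversality with the cocores of the $\geq 3$-handles of $B$ gives a generic intersection of dimension $1 + (d-4) - (d-1) = -2$, hence empty, so we may isotope $S^1$ to be disjoint from them. The complement of these cocores in $B$ deformation retracts onto $[-\infty,\infty] \times Y$ and further onto $B_2$, and we realise this retraction handle-by-handle as a compactly supported ambient isotopy of $B$, namely radial compression in the $D^k$-factor of each $k$-handle followed by a collar-absorbing move, transporting $S^1$ into $\mathrm{Int}(B_2)$. Extending via the tubular neighborhood theorem carries the full attaching region $S^1 \times D^{d-2}$ with it. For the $3$-handles, whose attaching $2$-spheres sit in $B_1 := B \cup H_2$, note that after the first step $B_2' := B_2 \cup H_2 \hookrightarrow B_1$ is again $2$-connected (it is obtained from the $2$-connected $B_2 \hookrightarrow B$ by attaching the same $H_2$ to both sides), while the cocores of the $\geq 3$-handles of $B$ survive in $B_1$ with dimension $\leq d-4$. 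The generic intersection of an attaching $S^2$ with such a cocore has dimension $2 + (d-4) - (d-1) = -1$, again empty, so the identical handle-by-handle argument isotopes each attaching $2$-sphere into $\mathrm{Int}(B_2')$.

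Finally, after these isotopies, in the Morse picture $C = B \times [0,1] \cup H_2 \cup H_3$ every handle is attached above $\mathrm{Int}(B_2)$, so the codimension-$1$ submanifold $\partial B_2 \times [0,1] \subset C$ is disjoint from all handles. It separates $C$ into $C_2 := B_2 \times [0,1] \cup H_2 \cup H_3$, which is a smooth cobordism $B_2 \leadsto B_2'$ trivial on $\partial B_2$, and $(B \setminus \mathrm{Int}(B_2)) \times [0,1]$; this gives the required diffeomorphism relative to $B$. The main technical obstacle I expect is realising the $S^2$-pushing step as a smooth ambient isotopy: a direct Haefliger-type ``homotopy implies isotopy'' argument is borderline for $S^2 \hookrightarrow B_1^{d-1}$ when $d=6$. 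The key feature that makes the argument go through uniformly for $d \geq 6$ is that we only ever need to dodge codimension-$\geq 3$ cocores supplied by the previous lemma, rather than attempting a global general-position argument in $B_1$.
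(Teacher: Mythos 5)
Your proof is correct and follows essentially the same strategy as the paper's: take the $2$- and $3$-handle decomposition of $C$ relative to $B$ from Remark~\ref{rem:handles}, use transversality to push the attaching regions off the cocores of the index-$\geq 3$ handles of $B$ (which have dimension $\leq d-4$), retract the complement of those cocores into $B_2$ (respectively its surgered analogue), and then read off the decomposition~\eqref{eq:2}. Your dimension counts ($1+(d-4)-(d-1)=-2$ and $2+(d-4)-(d-1)=-1$) make explicit the paper's remark that the cocores have codimension at least $3$.

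Two small comments on presentation. First, the $2$-connectivity of $B_2\hookrightarrow B$ and of $B_2'\hookrightarrow B_1$ that you invoke is true but not what carries the argument: the retraction of the complement of the cocores onto $B_2$ (resp.\ onto the surgered $B_2$) is what makes the isotopy exist, and that comes directly from the Morse/handle structure, not from a connectivity statement. Second, your concluding worry about a Haefliger-type ``homotopy implies isotopy'' step is misplaced, and you essentially say so yourself: the isotopy is produced by avoiding the codimension-$\geq 3$ cocores and flowing down the handles of $B$, which works uniformly for $d\geq 6$; no general-position unknotting theorem for $S^2$ in a $(d-1)$-manifold is needed. The paper's phrasing ``apply the same argument'' to the $3$-handles reflects exactly this.
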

\begin{proof}
  As in Remark \ref{rem:handles}, the cobordism $C : B \leadsto B'$ admits a handle structure relative to $B$ only having 2- and 3-handles.  As in the proof of the $h$-cobordism theorem, $C$ is diffeomorphic to a composition $C' \circ C''$, where $C'': B \leadsto B''$ is the trace of surgery along disjoint embeddings $S^1 \times D^{d-2} \to B$ and $C': B'' \to B'$ is the trace of surgery along disjoint embeddings $S^2 \times D^{d-3} \to B''$.

  The attaching maps $S^1 \times D^{d-2} \hookrightarrow B$ of the 2-handles may be isotoped to have image in $B_2$: first use transversality to avoid the co-cores of the handles of $B$ relative to $B_2 \subset B$, which have codimension at least 3, then retract the complement of those co-cores to $\mathrm{Int}(B_2)$ by an isotopy.  The cobordism $C'': B \leadsto B''$ is then diffeomorphic to a gluing of the form
  \begin{equation*}
    C_2'' \cup ([0,1] \times B \setminus \mathrm{Int}(B_2)),
  \end{equation*}
  where $C''_2$ is the trace of surgery along the isotoped embeddings $S^1 \times D^{d-2} \to B_2$.  In particular, the complement $B \setminus \mathrm{Int}(B_2)$ also sits inside $B''$, and we may apply the same argument to the attaching maps $S^2 \times D^{d-3} \to B''$ to isotope the attaching maps into the outgoing boundary of $C_2''$.  Letting $C'_2$ be the trace of surgery on those embeddings and $C_2 = C'_2 \circ C''_2$ gives the required decomposition~(\ref{eq:2}) of $C$.
\end{proof}

\begin{corollary}\label{corollary:product-B-suffices}
  It suffices to prove Theorem~\ref{thm:B} in the case $B = [-\infty,0] \times Y$ for some closed $(d-2)$-manifold $Y$, and $e_0$ is the restriction of an embedding $[-\infty,\infty] \times Y \hookrightarrow \partial M$.
\end{corollary}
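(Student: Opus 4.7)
The plan is to realize the decomposition $C \cong C_2 \cup ([0,1] \times B_c)$ from the previous lemma (with $B_c := B \setminus \mathrm{Int}(B_2)$) geometrically inside $M$, using a collar of $\partial M$ to house the trivial factor $[0,1] \times B_c$. Doing so will reduce the embedding problem for $(C, B, M, e_0)$ to one for $(C_2, B_2, \tilde M, e_0|_{B_2})$, where $\tilde M$ is obtained from $M$ by deleting an open collar of $e_0(\mathrm{Int}\, B_c) \subset \partial M$, and the reduced data will be of the special form required.

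Concretely, I would fix a collar $c : \partial M \times [0, 1) \hookrightarrow M$, let $\tilde M$ be the closure of $M \setminus c(e_0(\mathrm{Int}\, B_c) \times [0, 1/2))$, and smooth corners. The new boundary piece $c(e_0(\{0\} \times Y) \times [0, 1/2]) \subset \partial \tilde M$ then glues onto $e_0([-\infty, 0] \times Y) \subset \partial \tilde M$ to produce an embedding $[-\infty, \infty] \times Y \hookrightarrow \partial \tilde M$ extending $e_0|_{B_2}$, exactly the data demanded by the corollary.

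The comparison of embedding spaces will come from the restriction map $\rho : \mathrm{Emb}_B(C, M) \to \mathcal{E}$, where $\mathcal{E}$ is the space of smooth embeddings of $[0,1] \times B_c$ into $M$ extending $e_0|_{B_c}$ and agreeing with $c$ near $[0,1] \times \partial B_c$. By parametrized isotopy extension $\rho$ is a Serre fibration, and $\mathcal{E}$ is contractible by uniqueness of smooth collars. The fibre over the standard collar embedding $c \circ (e_0 \times \mathrm{id}_{[0, 1/2]})$ is identified, by construction, with $\mathrm{Emb}_{B_2}(C_2, \tilde M)$: an embedding of $C$ whose trivial factor occupies the deleted collar must send $C_2$ into $\tilde M$, with boundary data matching the extension to $[-\infty, \infty] \times Y$. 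Hence $\mathrm{Emb}_B(C, M) \simeq \mathrm{Emb}_{B_2}(C_2, \tilde M)$.

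Finally, I would verify that Theorem~\ref{thm:B} applies to $(C_2, B_2, \tilde M, e_0|_{B_2})$. The 2-connectedness of $B_2 \hookrightarrow B$ from the previous lemma forces $f_{B_2}$ to be 1-connected; van Kampen applied to the homotopy pushout $C \simeq C_2 \cup_Y B_c$ makes $C_2 \hookrightarrow C$ into a 2-connected inclusion, so $f|_{C_2}$ is 2-connected; and the gluing lemma applied to $B' \simeq B'_2 \cup_Y B_c$ shows $B'_2 \to C_2$ is a homotopy equivalence. The $\pi_1$-factoring hypothesis transfers along $B_2 \hookrightarrow B$ and the homotopy equivalence $\tilde M \simeq M$. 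I expect the main technical obstacle to be the careful handling of corners and collars, so as to identify the fibre of $\rho$ precisely with $\mathrm{Emb}_{B_2}(C_2, \tilde M)$ after smoothing.
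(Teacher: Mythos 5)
Your proof works and uses the same key idea as the paper --- the decomposition $C \cong C_2 \cup ([0,1] \times B_c)$ together with contractibility of a space of collar embeddings --- but the paper organises the fibration the other way around: it considers the restriction map $\mathrm{Emb}_B(C, M) \to \mathrm{Emb}_{B_2}(C_2, M)$, keeping the \emph{same} ambient $M$ on both sides, and observes that its homotopy fibres are spaces of collar embeddings of $[0,1]\times B_c$ relative to part of their boundary, hence contractible. You instead restrict to the collar factor and then pass to a modified manifold $\tilde M$; this also works, but costs you the corner-rounding and the re-identification of boundary data, all of which is avoidable by restricting to $C_2$ directly. One genuine imprecision: as written, your $\mathcal E$ requires agreement with $c$ near $[0,1] \times \partial B_c$, but $\partial B_c$ has two pieces, $\partial B_c\cap \partial B$ and $\{0\}\times Y$. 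For a general $e\in\mathrm{Emb}_B(C,M)$ the restriction $\rho(e)$ does \emph{not} agree with $c$ near the lateral face $[0,1]\times\{0\}\times Y$ --- its behaviour there is dictated by $e|_{C_2}$ --- so $\rho$ does not land in $\mathcal E$ as you defined it. You should impose the boundary condition only near $[0,1]\times(\partial B_c\cap\partial B)$, leaving the lateral face free; $\mathcal E$ is then still contractible (a space of collars with one free side), $\rho$ lands in it, and the fibre over the standard collar is $\mathrm{Emb}_{B_2}(C_2, \tilde M)$ as you intend, once $B_2$ is regarded together with the lateral collar data as embedded in $\partial\tilde M$. Your verification that $(C_2, B_2)$ is again a one-sided $h$-cobordism over $B\pi$ and that the $\pi_1$-condition persists is correct in substance and is in fact more explicit than the paper, which treats these points as immediate (for the claim that $B'_2 \to C_2$ is a homotopy equivalence, an excision argument on $H_*(-;\bZ[\pi])$ together with the $\pi_1$-isomorphism is cleaner than an appeal to a ``gluing lemma'', which runs in the wrong direction here).
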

\begin{proof}
  For arbitrary $e_0 : B \hookrightarrow \partial M$ as in the statement of the Theorem, we may choose a decomposition~(\ref{eq:2}) as above.  Then the restriction map
  $$\mathrm{Emb}_B(C, M) \lra \mathrm{Emb}_{B_2}(C_2, M)$$
  is a homotopy equivalence, as the homotopy fibre is a space of embeddings of a collar.  If the theorem holds for the one-sided $h$-cobordism $C_2: B_2 \leadsto B'_2$, it therefore also holds for $C: B \leadsto B'$.
\end{proof}

In the rest of this section, we shall assume $B\hookrightarrow \partial M$ is given a product decomposition as in the Corollary above.  For notational convenience, we may also arrange that the basepoint $b \in B$ for fundamental groups is in the image of $\{0\} \times Y \subset B \hookrightarrow \partial M$.

\begin{construction}\label{const:T}
Choose an embedding $\{1,2,\ldots, r\} \times S^1 \hookrightarrow (0,1) \times Y$ of loops whose conjugacy classes generate the kernel of 
$$\pi_1([0,1) \times Y,b) \cong \pi_1(\{0\} \times Y,b) \overset{\sim}\lra \pi_1(B,b) \overset{(f_B)_*}\lra \pi.$$
By assumption these loops are nullhomotopic under $(0,1) \times Y \subset \partial M \subset M$, so by general position they can be extended to a smooth embedding $\{1,2,\ldots, r\} \times D^2 \hookrightarrow M$. As $D^2$ is contractible, over each path component this embedding has trivial normal bundle, giving an embedding $\{1,2,\ldots, r\} \times D^2 \times D^{d-2} \hookrightarrow M$ and so over the boundary an embedding
$$\phi : \{1,2,\ldots, r\} \times S^1 \times D^{d-2} \hookrightarrow (0,1) \times Y.$$
Writing 
$$T : [0,1] \times Y \leadsto ([0,1] \times Y)'$$
for the trace of the surgery on $\phi$, we have arranged that $T$ embeds in $M$ relative to $[0,1] \times Y$.
\end{construction}

We write $[p] := \{0,1,\ldots, p\}$ considered as a totally ordered set; the category $\Delta_{\inj}$ with objects the $[p]$ for $p \geq 0$ and morphisms the order-preserving injections is the {semi-simplicial category}, and a functor $\Delta^{op}_{\inj} \to \mathsf{Top}$ is called a \emph{semi-simplicial space}. The {augmented semi-simplicial category} $\Delta_{\aug}$ is obtained by adjoining the totally ordered set $[-1] := \emptyset$, and an \emph{augmented semi-simplicial space} is a functor $\Delta^{op}_{\aug} \to \mathsf{Top}$.

\begin{definition}\label{def:Xbullet}
  For $C: B \leadsto B'$ as above (i.e., as in Corollary~\ref{corollary:product-B-suffices}), let $X_\bullet$ denote the augmented semi-simplicial space with $X_p$ given by the space of collared embeddings
$$e : C \amalg ([p] \times T) \lra M$$
of the disjoint union of $C$ and $p+1$ copies of $T$ such that
\begin{enumerate}[(i)]
\item $e\vert_{B} = \mathrm{inc} : B \to \partial M$,

\item $e\vert_{\{i\} \times [0,1] \times Y}$ sends $\{i\} \times [0,1] \times Y$ into $[0,\infty] \times Y \subset \partial M$ by a map of the form $a_i \times \mathrm{id}_Y$ for $a_i : x \mapsto x+t_i : [0,1] \to [0,\infty]$ a translation, such that $0 \leq t_0 < t_1 < \cdots < t_p$.

\end{enumerate}
Elements of $X_1$ may be depicted as in Figure~\ref{fig:1}.
We topologise $X_p$ as a subspace of the space of all collared embeddings. The face map associated to $\theta : [p] \to [q] \in \Delta_{\aug}$ is induced by precomposing with 
$$\mathrm{id}_C \amalg \theta \times T : C \amalg ([p] \times T) \lra C \amalg ([q] \times T).$$
It has $X_{-1} = \mathrm{Emb}_{B}(C, M)$.

Similarly, let $Z_\bullet$ denote the augmented semi-simplicial space with $p$-simplices given by the space of collared embeddings
$$e : [p] \times T \lra M$$
such that (ii) holds, with the analogous face maps. It has $Z_{-1} = \Emb(\emptyset, M)$, which is a single point.
\end{definition}

\begin{figure}[h]
\includegraphics{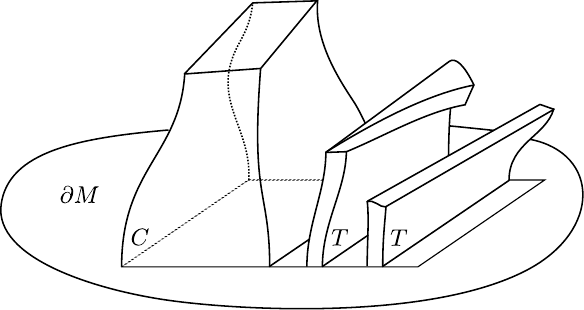}
\caption{A typical 1-simplex in $X_\bullet$.}
\label{fig:1}
\end{figure}

Restricting an embedding of $C \amalg ([p] \times T)$ to $[p] \times T$ defines a map $f_\bullet : X_\bullet \to Z_\bullet$ of augmented semi-simplicial spaces. This gives a commutative square
\begin{equation}\label{eq:square}
\begin{tikzcd}
& {|X_\bullet|} \arrow[r, "{|f_\bullet|}"] \dar & {|Z_\bullet|} \dar\\
\mathrm{Emb}_{B}(C, M) \arrow[r, equals] & X_{-1} \arrow[r, "f_{-1}"] & Z_{-1} \arrow[r, equals] & *
\end{tikzcd}
\end{equation}
so to show that $\mathrm{Emb}_{B}(C, M)$ is contractible, i.e.\ that $f_{-1}$ is an equivalence, it suffices to show that the other three maps are equivalences.

The following lemma, together with homotopy invariance of geometric realisation of semi-simplicial spaces (see e.g.\ \cite[Theorem 2.2]{ER-WSx}), shows that the top map in \eqref{eq:square} is an equivalence.

\begin{lemma}
The maps $f_p : X_p \to Z_p$ are equivalences for all $p \geq 0$.
\end{lemma}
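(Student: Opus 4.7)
The plan is to realize $f_p$ as a Serre fibration and show its fibers are contractible.

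By parametrized isotopy extension applied to the inclusion $[p] \times T \hookrightarrow C \amalg ([p] \times T)$ of submanifolds into $M$, the restriction map $f_p$ is a Serre fibration over its image in $Z_p$. Its fiber over $e' \in Z_p$ is canonically identified with $\mathrm{Emb}_B(C, M \setminus \mathrm{int}(e'([p] \times T)))$, the space of embeddings of $C$ relative to $B$ whose image avoids the placed copies of $T$.

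I would first verify non-emptiness, so that $f_p$ is surjective on $\pi_0$ and its image is all of $Z_p$. By condition (ii) each $e'(\{i\} \times T)$ sits over $[t_i, t_i + 1] \times Y \subset \partial M$ with $t_i \geq 0$, so a disjoint embedding of $C$ can be produced directly: arrange the 2-handles of $C$ to sit in a collar of $B = [-\infty, 0] \times Y$, and its 3-handles just inside $M$ in the same region, keeping everything disjoint from $e'([p] \times T)$.

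The crux is contractibility of the fiber. Here I would exploit the fact that both $T$ and the 2-handle skeleton of $C$ are built by attaching 2-handles along loops generating the perfect kernel $P = \ker((f_B)_* : \pi_1(B,b) \to \pi)$ (compare Construction~\ref{const:T} with Remark~\ref{rem:handles}). Since $p \geq 0$, the configuration $e'([p] \times T)$ contains at least one embedded copy of $T$, which I would use as a canonical template for the 2-handle part of an embedded $C$; the remaining 3-handles are then attached in the complement in codimension $d-3 \geq 3$, where the embedding space is highly connected. A filtration-type argument over handles should then yield contractibility.

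The main obstacle will be to parametrize this construction. The 2-handles of $C$ are attached within $B$, whereas those of $T$ are attached within $(0,1) \times Y \subset \partial M \setminus B$, so identifying them requires a contractible family of isotopies handled with care. Moreover, showing that the resulting construction exhausts the whole fiber up to contractible ambiguity requires verifying that any embedding of $C$ disjoint from $e'([p] \times T)$ can be canonically isotoped into this standard form; I would expect a parametric disjunction lemma (of Whitney-trick flavour, available because $d \geq 6$) to do the work here. This is where the bulk of the technical effort will sit.
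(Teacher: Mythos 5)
Your reduction to a Serre fibration and identification of the fibre over $e' \in Z_p$ with $\mathrm{Emb}_B(C, M \setminus \mathrm{int}(e'([p]\times T)))$ is exactly how the paper starts. But the heart of the lemma — contractibility of that fibre — is where your proposal and the paper diverge, and your sketch does not actually supply an argument.

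You correctly notice that $T$ and the 2-handle part of $C$ are built from the same generating loops of $P$, but you then propose to use an embedded copy of $T$ as a ``template'' for the 2-handles of $C$, followed by an unspecified ``parametric disjunction lemma'' and ``filtration-type argument over handles.'' That is precisely the hard technical content you have not produced, and it is unclear it can be made to work as stated: the $2$-handles of $C$ and those of $e'(\{i\}\times T)$ cannot be made to coincide while remaining disjoint, and a direct Whitney-trick style parametric disjunction in this setting is not a routine lemma.

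The observation the paper uses instead is structural and much cleaner, and you did not find it: after extending $e'$ to an embedding $\bar e$ that also covers a collar of the remaining part of $[0,\infty]\times Y$, and setting $\overline{M} := M\setminus\mathrm{int}(\mathrm{Im}(\bar e))$ (with corners rounded), the new boundary $\partial\overline{M}$ differs from $\partial M$ by the surgeries $\phi$ used to build $T$. Those surgeries kill exactly the conjugacy classes generating the perfect kernel $P$, so when $C$ (enlarged by a collar to $\overline{C}$) is viewed as a cobordism on $\partial\overline{M}$, its 2-handles are attached along nullhomotopic circles; the one-sided $h$-cobordism $\overline{C}$ is therefore an honest $h$-cobordism, from which contractibility of $\mathrm{Emb}_{\partial\overline{M}}(\overline{C},\overline{M})\simeq\mathrm{Emb}_B(C, M\setminus\mathrm{Im}(e'))$ follows. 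Without this change of perspective — that removing the copies of $T$ changes the fundamental group of the boundary and trivialises the cobordism — your approach lacks the key idea, and ``non-emptiness'' plus ``a filtration over handles'' does not fill the gap.
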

The proof of this Lemma is reminiscent of the arguments of Sections 4 and 5 of \cite{grwstab2}.
\begin{proof}
The map $f_p$ is a Serre fibration, by the parameterised isotopy extension theorem. Its fibre over an embedding $e : [p] \times T \to M$ is the space of embeddings of $C$ into $M$ relative to $B$ which have image disjoint from $\mathrm{Im}(e)$. Equivalently, it is the space of embeddings of $C$ into
$M \setminus \mathrm{Im}(e)$ relative to $B = [-\infty,0] \times Y$. The embedding $e$ can be extended to an embedding 
$$\bar{e} : ([p] \times T) \cup ([0,1] \times ([0,\infty] \times Y \setminus \mathrm{int}(e([p] \times [0,1] \times Y)))) \lra M,$$
and because the domains of these embeddings differ by a collar, the fibre of $f_p$ is also equivalent to $\mathrm{Emb}_B(C, M \setminus \mathrm{Im}(\bar{e}))$.  This situation may be depicted as in Figure~\ref{fig:2}.

\begin{figure}[h]
\includegraphics{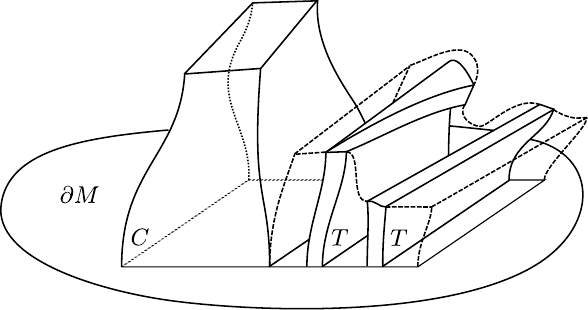}
\caption{The image of an embedding $\bar{e}$ extending $e$ is indicated with dashed lines.}
\label{fig:2}
\end{figure}

We may replace the target in this space of embeddings with the manifold $\overline{M}$ formed by rounding corners of $M \setminus \mathrm{int}(\mathrm{Im}(\bar{e}))$.
Then letting
$$\overline{C} := C \cup \{\text{a collar of } \partial \overline{M} \subset \overline{M}\},$$
which is again a one-sided $h$-cobordism, we see that the restriction map
\begin{equation}\label{eq:SurgeredCollar}
\mathrm{Emb}_{\partial \overline{M}}(\overline{C}, \overline{M}) \lra \mathrm{Emb}_{B}(C, \overline{M})
\end{equation}
is again an equivalence, because $C$ and $\overline{C}$ differ by a collar. But the one-sided $h$-cobordism $\overline{C}$ is in fact an $h$-cobordism, because having removed $\mathrm{int}(\mathrm{Im}(\bar{e}))$ from $M$ means that $\partial \overline{M}$ differs from $\partial M$ by doing at least one collection of surgeries $\phi$ inside $[0,\infty] \times Y$, so the 2-handles of $\overline{C}$ are attached to $\partial \overline{M}$ along nullhomotopic maps.
Being an $h$-cobordism, the domain of \eqref{eq:SurgeredCollar} is contractible, as required.
\end{proof}

The following lemma shows that the vertical maps in \eqref{eq:square} are equivalences.

\begin{lemma}
The augmentation maps
$$|X_\bullet| \to X_{-1} \quad\quad\quad |Z_\bullet| \to Z_{-1}$$
are equivalences.
\end{lemma}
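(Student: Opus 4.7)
The plan is to show that both augmentation maps are Serre microfibrations with weakly contractible fibres, which then implies they are weak equivalences by the standard lemma that a Serre microfibration with weakly contractible fibres is a weak equivalence. Parametrised isotopy extension, applied to the collared embeddings parametrising the simplices, yields the microfibration property for $\epsilon \colon |X_\bullet| \to X_{-1}$; for $|Z_\bullet| \to Z_{-1} = *$ only contractibility of $|Z_\bullet|$ need be shown.

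To identify the fibre, fix $e_0 \in X_{-1} = \mathrm{Emb}_B(C, M)$. The fibre $\epsilon^{-1}(e_0)$ is weakly equivalent to the realisation of the semi-simplicial space $W^{e_0}_\bullet$, where $W^{e_0}_p$ parameterises collared embeddings $[p] \times T \hookrightarrow M$ satisfying the height condition (ii) with image disjoint from $e_0(C)$. The $Z$-augmentation is the special case where the disjointness constraint is vacuous, so it suffices to prove that all such $|W^{e_0}_\bullet|$ are contractible. I would verify this via the criterion that an augmented semi-simplicial space has contractible realisation if, for every $p \geq -1$ and every compact family $\phi \colon K \to W^{e_0}_p$, one can continuously extend $\phi$ to a map $\tilde{\phi} \colon K \to W^{e_0}_{p+1}$ whose $d_0$-face is $\phi$.

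Given $\phi \colon K \to W^{e_0}_p$, all heights involved are bounded by some $N$, so I would shift $\phi$ up by one unit in the $[0,\infty] \times Y$ direction, translating the attaching regions and carrying the corresponding $2$-handles along by an isotopy in $M$, and then insert a new $0$th copy of $T$ with attaching region $[0,1] \times Y$ and $2$-handles disjoint from everything else. The main geometric input is that the attaching circles of this new copy bound discs in the complement of $e_0(C)$ together with the shifted handles: they are null-homotopic in $M$ by Construction~\ref{const:T}, and the complement has the same $\pi_1$-quotient to $\pi$, so the perfect normal subgroup they generate still dies there. In families, the continuous choice of such disjointness-preserving $2$-discs uses that $d \geq 6$, giving the $2$-dimensional handle cores codimension at least $4$, so that they can be put in general position continuously over $K$. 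The main obstacle is executing this construction canonically and continuously in the family parameter $K$, which I would handle by the same parametrised-surgery techniques as in \cite[Sections 4--5]{grwstab2}.
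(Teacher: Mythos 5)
Your route differs from the paper's: the paper relaxes $X_\bullet$ to a semi-simplicial space $X'_\bullet$ in which only the \emph{cores} of the copies of $T$ need be disjoint, observes that $X'_\bullet$ is an augmented topological flag complex, and then invokes \cite[Theorem 6.2]{GRWActa}, whose hypotheses require a Serre fibration only at the level of $0$-simplices. You instead propose to prove directly that $|X_\bullet| \to X_{-1}$ is a microfibration with contractible fibres. The geometric input (translating along $[0,\infty]\times Y$ to make room for a fresh copy of $T$, general position of $2$- and $3$-dimensional cores when $d \geq 6$, the null-homotopy supplied by Construction \ref{const:T}) is the same in both arguments, so your approach is a legitimate alternative in spirit, but it has two real gaps.

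The first is that the contractibility criterion you quote is false as stated. You assert that an augmented semi-simplicial space $W_\bullet$ (over a point) has contractible realisation whenever every compact family $\phi: K \to W_p$ extends to $\tilde\phi: K \to W_{p+1}$ with $d_0\tilde\phi = \phi$. Take $W_\bullet$ to be the semi-simplicial nerve of the group $\bZ$ viewed as a one-object category, so $W_p = \bZ^p$ with $d_0(n_1,\dots,n_p)=(n_2,\dots,n_p)$: prepending the identity morphism $0\in\bZ$ verifies the criterion for all $p$ and all $K$, yet $|W_\bullet| \simeq B\bZ = S^1$. What is actually needed is a \emph{coherent} choice of new vertex, so that the various extensions assemble to cone off an entire simplicial sphere mapping to $|W_\bullet|$; the flag complex machinery of \cite[Theorem 6.2]{GRWActa} handles precisely this coherence. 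Your geometric construction (shift up by one unit, insert a vertex at height $0$) does supply such a coherent cone, but the criterion you have written down does not capture that, and additionally $d_0$ of your extension is a translate of $\phi$, not $\phi$ itself, which again only works once the argument is phrased at the level of a genuine coning of spheres in the realisation rather than a face-by-face extension.

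The second gap is that you have not addressed the discrepancy between the general-position statement you can prove and the condition defining $X_\bullet$. Definition \ref{def:Xbullet} asks the images of $C$ and the various copies of $T$ to be genuinely disjoint, whereas perturbing the ($2$- and $3$-dimensional) cores into general position in dimension $d\geq 6$ only makes those \emph{cores} disjoint; the thickened handles $D^2 \times D^{d-2}$ may well still intersect. This is exactly why the paper passes to $X'_\bullet$, which imposes disjointness only of cores, and then shows $X_p \to X'_p$ is a weak equivalence by a $1$-parameter shrinking of $T$ towards its core as in \cite[Proposition 6.7]{GRWActa}. Without that relaxation your extension $\tilde\phi$ need not land in $W^{e_0}_{p+1}$. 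Finally, the claim that parameterised isotopy extension alone makes $|X_\bullet| \to X_{-1}$ a Serre microfibration is plausible but is an assertion about the realisation, not about the levels $X_p$, and deserves a citation or argument of its own; the flag complex route sidesteps this by needing only $X'_0 \to X_{-1}$ to be a Serre fibration.
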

\begin{proof}
We will treat $X_\bullet \to X_{-1}$, then $Z_\bullet \to Z_{-1}$ is done in exactly the same way but with some simplifications. We first define a more relaxed version $X'_\bullet$ of $X_\bullet$, by letting a $p$-simplex be a map
$$f : C \amalg ([p] \times T) \lra M$$
which is an embedding when restricted to $C$ and to each $\{i\} \times T$, and such that $f(C)$ is disjoint from each $f(\{i\} \times T)$, but where we only insist that the $f(\{i\} \times T)$ have disjoint cores to each other. Here, the \emph{core} of $T$ means the subspace
$$([0,1] \times Y) \cup (\{1,2,\ldots, r\} \times D^2) \subset T$$
given by its incoming boundary and the cores of the 2-handles. Apart from this difference we continue to insist that properties (i) and (ii) of Definition \ref{def:Xbullet} hold. There is an inclusion $X_\bullet \to X'_\bullet$, and the maps $X_p \to X'_p$ are all equivalences, by choosing a 1-parameter family of isotopies of $T$ which shrink it down into arbitrarily small neighbourhoods of its core (see e.g.\ \cite[Proposition 6.7]{GRWActa} for a similar argument). By \cite[Theorem 2.2]{ER-WSx} the map $|X_\bullet| \to |X'_\bullet|$ is an equivalence, so we are reduced to showing that $|X'_\bullet| \to X_{-1}$ is an equivalence.

To do this, we will show that $X'_\bullet \to X_{-1}$ is an augmented topological flag complex  \cite[Definition 6.1]{GRWActa}, and then verify the hypotheses of \cite[Theorem 6.2]{GRWActa}. To see that it is an augmented topological flag complex, first note the $(p+1)$-fold fibre product $X'_0 \times_{X_{-1}} X'_0 \times_{X_{-1}} \cdots \times_{X_{-1}} X'_0$ consists of an embedding $e_C$ of $C$ into $M$ satisfying (i) along with $(p+1)$ embeddings $e_T^i$ of $T$ individually satisfying (ii), which are each disjoint from $C$, so that $X'_p$ is indeed the open subspace of those tuples where the $e_T^i$'s in addition have disjoint cores from from each other. Furthermore, a collection of $e_T^i$'s have disjoint cores from from each other if and only if they pairwise do. Thus $X'_\bullet \to X_{-1}$ is indeed an augmented topological flag complex.

We now verify the hypotheses of \cite[Theorem 6.2]{GRWActa}. The restriction map $X'_0 \to X_{-1}$ is a Serre fibration by the parameterised isotopy extension theorem, so hypothesis (i) is satisfied. 

To verify hypothesis (ii) of \cite[Theorem 6.2]{GRWActa} we must show that for any embedding $e_C : C \hookrightarrow M$ relative to $B$ there exists an embedding $e_T : T \hookrightarrow M$ relative to $[0,1] \times Y$ and disjoint from $e_C(C)$. By Construction \ref{const:T} we have arranged that there is an embedding $f : T \hookrightarrow M$ relative to $[0,1] \times Y$, which is then disjoint from $C$ on the boundary. As $T$ only has 2-handles relative to $\partial M$, $C$ only has 2- and 3-handles relative to $\partial M$, and $d \geq 6$, by general position the cores of handles of $f(T)$ will be disjoint from those of $e_C(C)$, and $f$ can therefore be isotoped relative to $\partial M$ to miss $e_C(C)$.

To verify hypothesis (iii) of \cite[Theorem 6.2]{GRWActa} we must show that given an embedding $e_C : C \hookrightarrow M$ relative to $B$, and a non-empty set of embeddings $\{e_T^i : T \hookrightarrow M\}$ which are disjoint from $e_C(C)$, there is another $e_T : T \hookrightarrow M$ which misses $e_C(C)$ and whose core is disjoint from the cores of all the $e_T^i(T)$'s. If on the boundary $e_T^i$ is given by $(x, y) \mapsto (x + t_i, y) : [0,1] \times Y \to [0,\infty] \times Y$, then choosing some $t \gg t_i$ there by Construction \ref{const:T} is an embedding $f : T \hookrightarrow M$ which is given by $(x, y) \mapsto (x + t, y)$ on the boundary. This is then disjoint from $e_C(C)$ and the $e_T^i(T)$'s on the boundary. As in the paragraph above we may isotope $f$ to be disjoint from $C$. Finally, after perhaps perturbing $f$ the cores of $f(T)$ will be disjoint from those of all $e_T^i(T)$'s, because they are all 2-dimensional and $d \geq 6$.

Thus \cite[Theorem 6.2]{GRWActa} applies, showing that $|X_\bullet| \simeq |X'_\bullet | \to X_{-1}$ is an equivalence.  The case $|Z_\bullet| \to Z_{-1}$ is similar but easier.
\end{proof}

\section{Consequences for embedding spaces and embedding calculus}\label{sec:Consequences}

Theorem \ref{thm:B} can be used to show that spaces of embeddings of manifolds which differ by a one-sided $h$-cobordism are homotopy equivalent. We give the following example. 

\begin{corollary}\label{cor:C}
Let $N \subset N'$ be compact smooth manifolds of dimension $d \geq 6$ both admitting handle structures with all handles of index $\leq d-3$, and such that the inclusion is acyclic. Then for any $d$-manifold $M$ the restriction map
$$\mathrm{res}: \mathrm{Emb}(N', M) \lra \mathrm{Emb}(N, M)$$
is a homotopy equivalence onto the subspace of those embeddings $e$ such that $e_* : \pi_1(N) \to \pi_1(M)$ factors through $\pi_1(N) \to \pi_1(N')$.
\end{corollary}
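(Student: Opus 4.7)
The plan is to realise $\mathrm{res}$ as a Serre fibration whose fibres are spaces of embeddings of a one-sided $h$-cobordism, and then invoke Theorem~\ref{thm:B}. After a small isotopy I may arrange $N \subset \mathrm{int}(N')$, so that $C := \overline{N' \setminus N}$ is a smooth cobordism from $\partial N$ to $\partial N'$. Parameterised isotopy extension presents $\mathrm{res}$ as a Serre fibration onto its image, whose fibre over $e : N \hookrightarrow M$ is naturally identified with $\mathrm{Emb}_{\partial N}(C, \overline{M \setminus e(\mathrm{int}(N))})$.

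The main geometric input is that $C$ is a one-sided $h$-cobordism on $\partial N$ over $B\pi$ for $\pi := \pi_1(N')$, with structural map $f : C \hookrightarrow N' \to B\pi$. The handle hypotheses on $N$ and $N'$ imply, via handle duality, that $\partial N \hookrightarrow N$ and $\partial N' \hookrightarrow N'$ are both $2$-connected. In particular $\pi_1(\partial N) \xrightarrow{\cong} \pi_1(N)$, so van Kampen applied to $N' = N \cup_{\partial N} C$ yields $\pi_1(C) \xrightarrow{\cong} \pi_1(N') = \pi$, whence $f$ is $2$-connected. Excision applied to the acyclicity hypothesis gives $H_*(C, \partial N; \bZ[\pi]) \cong H_*(N', N; \bZ[\pi]) = 0$, so $\partial N \hookrightarrow C$ is acyclic. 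Finally, Lefschetz duality yields $H_*(C, \partial N'; \bZ[\pi]) \cong H^{d-*}(C, \partial N; \bZ[\pi]) = 0$, and combined with the $\pi_1$-isomorphism $\pi_1(\partial N') \cong \pi \cong \pi_1(C)$ this forces $\partial N' \hookrightarrow C$ to be a homotopy equivalence.

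With $C$ a one-sided $h$-cobordism in hand, Theorem~\ref{thm:B} applies to the fibre once its $\pi_1$-hypothesis is verified. The kernel $P := \ker(\pi_1(\partial N) \to \pi) \cong \ker(\pi_1(N) \to \pi_1(N'))$ maps to $0$ in $\pi_1(M)$ by the $\pi_1$-factorisability of $e$; van Kampen for $M = e(N) \cup \overline{M \setminus e(\mathrm{int}(N))}$, combined with $\pi_1(\partial N) \xrightarrow{\cong} \pi_1(N)$, makes $\pi_1(\overline{M \setminus e(\mathrm{int}(N))}) \to \pi_1(M)$ injective, so $P$ must already map to $0$ in $\pi_1(\overline{M \setminus e(\mathrm{int}(N))})$. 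Hence the fibre is contractible. The image of $\mathrm{res}$ is trivially contained in the $\pi_1$-factorisable subspace, and the reverse inclusion follows from the non-emptyness assertion accompanying Theorem~\ref{thm:B}, which produces an embedding of $C$ extending $e|_{\partial N}$ and hence an extension of $e$ to $N'$. The hardest step is showing $\partial N' \hookrightarrow C$ is a homotopy equivalence, for which Lefschetz duality is essential; the other ingredients are essentially bookkeeping with van Kampen and excision.
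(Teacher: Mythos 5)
Your argument follows the same route as the paper: decompose $N' = N \cup_{\partial N} C$, observe that the handle hypotheses force $C : \partial N \leadsto \partial N'$ to be a one-sided $h$-cobordism over $B\pi_1(N')$, identify the homotopy fibre of $\mathrm{res}$ with $\mathrm{Emb}_{\partial N}(C, M\setminus \mathrm{int}(e(N)))$ by isotopy extension, and apply Theorem~\ref{thm:B}. The paper's proof is terser: it records only the acyclicity $H_*(C,\partial N;\bZ[\pi])=0$ and the $\pi_1$-isomorphisms and then declares $C$ a one-sided $h$-cobordism (the Lefschetz duality step you spell out to get $\partial N' \hookrightarrow C$ a homotopy equivalence is taken as read, having been rehearsed in the introduction), and it leaves the verification of the $\pi_1$-hypothesis of Theorem~\ref{thm:B} to the reader. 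Your van Kampen argument that $\pi_1(\overline{M\setminus e(\mathrm{int}(N))})\to\pi_1(M)$ is injective (indeed an isomorphism, since $\pi_1(\partial N)\to\pi_1(N)$ is one) is precisely the content that makes that verification go through, so your write-up is a correct and slightly more detailed version of the paper's proof rather than a genuinely different one.
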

The condition about fundamental groups is required to hold for all basepoints $b \in N$, or equivalently, one in each path component.
\begin{proof}
  Without loss of generality, we may assume that $N' = N \cup_{\partial N} C$ for some cobordism $C : \partial N \leadsto \partial N'$.  The condition on handles implies that the inclusions $\partial N' \to C \to N'$ and $\partial N \to N$ induce bijections on $\pi_0$, and for convenience of notation we shall assume these manifolds are all path connected.  Choosing a basepoint $b \in \partial N'$ and letting $\pi = \pi_1(N',b)$, the condition on handles of $N$ also implies that $\pi_1(C,b) \overset{\sim}\to \pi_1(N',b) = \pi$, and under the condition on handles of $N'$ we have $\pi_1(\partial N',b) \overset{\sim}\to \pi_1(N',b) = \pi$. Then $H_*(C, \partial N ; \bZ[\pi]) = H_*(N', N ; \bZ[\pi])=0$ by excision, so $C$ is a one-sided $h$-cobordism on $\partial N$ over $B\pi$. By the isotopy extension theorem, the homotopy fibre of $\mathrm{res}$ over a point $e : N \hookrightarrow M$ is given by $\mathrm{Emb}_{\partial N}(C, M \setminus \mathrm{int}(e(N)))$. If $e$ satisfies the given condition on fundamental groups, then this is contractible by Theorem \ref{thm:B}.
\end{proof}

Because of the assumption about handle dimension of the two manifolds, in the setting of Corollary \ref{cor:C} embedding calculus converges, and so the conclusion can also be phrased in terms of $T_\infty \mathrm{Emb}(-, M)$. 

In the setting of the following Corollary embedding calculus need not converge, so the result can only be formulated for $T_\infty \mathrm{Emb}(-, M)$. Recall that via the maps
$$T_\infty \mathrm{Emb}(N, M) \lra T_1 \mathrm{Emb}(N, M) \overset{\eqref{eq:ImmToMaps}}\lra \map(N, M)$$
each $T_\infty e \in T_\infty \mathrm{Emb}(N, M)$ determines an underlying continuous map $e : N \to M$.

\begin{corollary}\label{cor:D}
  Let $N \subset N'$ be compact smooth manifolds
such that either:
\begin{enumerate}[(i)]
\item\label{it:corD:1}  $N$ admits a handle structure with all handles of index $\leq d-3$, and the inclusion $N \subset N'$ is acyclic, or
\item\label{it:corD:2} the inclusion $N \subset N'$ is a homotopy equivalence.

\end{enumerate}

Then the restriction map
$$\mathrm{res}: T_\infty\mathrm{Emb}(N', M) \lra T_\infty\mathrm{Emb}(N, M)$$
is a homotopy equivalence onto the path-components of those $T_\infty e$ such that $e_* : \pi_1(N) \to \pi_1(M)$ factors through $\pi_1(N) \to \pi_1(N')$. (This condition is vacuous in case (ii).)
\end{corollary}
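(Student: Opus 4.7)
My plan is to argue layer-by-layer along the Goodwillie--Weiss Taylor tower, following the template of the first proof of Theorem~\ref{thm:B}. Both hypotheses imply that the inclusion $N \hookrightarrow N'$ is acyclic, which is the sole input I will really need; case~(\ref{it:corD:2}) additionally makes $\pi_1(N) \to \pi_1(N')$ an isomorphism, so that the factoring condition becomes vacuous.

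At the first stage, $T_1 \mathrm{Emb}(-, M)$ is the space of sections of the fibration $\mathrm{Iso}(T(-), TM) \to (-)$, so the restriction map is the restriction of sections along the acyclic inclusion $N \hookrightarrow N'$. I would apply Proposition~\ref{prop:UPAcyclic}(\ref{it:UPAcyclic2}) to identify this as an equivalence onto those components in which the section extends over $N'$, and then repeat the argument from the proof that $T_1 \mathrm{Emb}_B(C, M) \simeq *$---namely, that $\ker(\pi_1(N) \to \pi_1(N'))$ is perfect by acyclicity, while the relevant kernel in $\pi_1(\mathrm{Iso}(TN, TM))$ is a quotient of $\pi_1(\mathrm{GL}_d(\R)) \cong \Z/2$ and hence abelian---to reduce this extension criterion to exactly the stated condition that $e_\ast: \pi_1(N) \to \pi_1(M)$ factors through $\pi_1(N')$.

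For the layers $L_k \mathrm{Emb}(-, M)$ with $k \geq 2$, each side is a space of sections of a fibration $\pi_k: E_k \to C_k(-)$ prescribed near the diagonal locus, and the restriction map is induced by $C_k(N) \hookrightarrow C_k(N')$. The argument of Section~\ref{sec:EmbCalcProof} that the fibres of $\pi_k$ are $(d-2)$-connected applies verbatim, so $\pi_1(E_k) \to \pi_1(C_k(N'))$ is an isomorphism and the fundamental group hypothesis of Proposition~\ref{prop:UPAcyclic}(\ref{it:UPAcyclic2}) is automatic. The layer then becomes an equivalence as soon as $C_k(N) \hookrightarrow C_k(N')$ is acyclic, and this is where I expect the main obstacle to lie. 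My plan is to establish it via the filtration $F_j C_k(N') = \{y \in C_k(N') \mid |\{i : y_i \notin N\}| \leq j\}$, with $F_0 = C_k(N)$ and $F_k = C_k(N')$, and to inductively verify that each $F_{j-1} \hookrightarrow F_j$ is acyclic by a pushout analysis parallel to Lemma~\ref{lem:acyclic}, starting from the acyclicity of $N^j \hookrightarrow (N')^j$ (by K\"unneth for acyclic maps).

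Assembling these layer-wise equivalences along the Taylor tower would then yield the corollary, with the image of the restriction being precisely the components of $T_\infty \mathrm{Emb}(N, M)$ satisfying the factoring condition, which is all components in case~(\ref{it:corD:2}).
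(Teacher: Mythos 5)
Your plan is genuinely different from the paper's. The paper does not argue layer-by-layer at all: in case~(\ref{it:corD:1}) it uses that convergence of embedding calculus for $N$ gives $\mathrm{Emb}(N,M)\simeq T_\infty\mathrm{Emb}(N,M)$ and then invokes the Knudsen--Kupers isotopy extension theorem for $T_\infty$ to produce a fibre sequence whose fibre is $T_\infty\mathrm{Emb}_{\partial N}(C, M\setminus e(\mathrm{int}\, N))$, reducing to the argument already given in Section~\ref{sec:EmbCalcProof}; and in case~(\ref{it:corD:2}) it switches to the Boavida de Brito--Weiss presheaf model, where it suffices to check $\mathrm{Emb}(U,N)\to \mathrm{Emb}(U,N')$ is an equivalence for $U$ a finite disjoint union of balls, proved by induction on the number of balls via isotopy extension. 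Your proposed route is to compare the two Taylor towers directly, which would avoid the isotopy extension input for $T_\infty$ in case~(\ref{it:corD:1}) and would treat the two cases uniformly; that is an appealing strategy, but it has a real gap.

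The gap is the key lemma you postpone, that the inclusion $C_k(N)\hookrightarrow C_k(N')$ is acyclic. This is where the analogy with Lemma~\ref{lem:acyclic} breaks down. In the paper, the filtration lives on the Cartesian power $C^k$, where there are no distinctness constraints, so each step $F_{j-1}C^k\to F_jC^k$ is literally a pushout along inclusions of products $B^{k-j}\times(\,\cdot\,)$, and one passes to configuration spaces only at the very end by an excision argument that uses $D_k\cup \tilde C_k(C)=C^k$ and $D_k\cap\tilde C_k(C)=\tilde\nabla_\partial$. For your problem there is no analogous excision: you would want $N^k\cup\tilde C_k(N')=(N')^k$ with intersection $\tilde C_k(N)$, but a tuple in $(N')^k$ with a coincidence involving a point outside $N$ lies in neither $N^k$ nor $\tilde C_k(N')$, and conversely $N^k\cap\tilde C_k(N')=\tilde C_k(N)$ does hold but the union fails, so excision gives nothing. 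Your proposed filtration $F_jC_k(N')$ inside the configuration space is also not of the pushout form used in Lemma~\ref{lem:acyclic}: the strata $F_j\setminus F_{j-1}$ do decompose as products $\tilde C_{k-j}(N)\times\tilde C_j(N'\setminus N)$ (because the two regions are disjoint), but their closures in $\tilde C_k(N')$ acquire coincidences along $\partial N$ between points coming from the two factors, so the attaching loci are not simply $\tilde C_{k-j}(N)\times F_{j-1}\tilde C_j(\,\cdot\,)$ and the inductive pushout analysis does not go through as stated. The statement you want is very likely true --- for instance in case~(\ref{it:corD:2}) it follows a posteriori from the paper's presheaf argument, since $\mathrm{Emb}(\amalg^k\R^d,N)$ is a frame bundle over $\tilde C_k(N)$ --- but you would need a genuinely new argument for it, not a transcription of Lemma~\ref{lem:acyclic}. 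There is also a secondary point worth flagging: to conclude from layer-wise equivalences to an equivalence of $T_\infty$'s onto the stated components, you must track compatible basepoints for the homotopy fibres defining $L_k$ across both towers; this is routine but not free, and the paper's two proofs avoid it entirely.
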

\begin{proof}
In case (\ref{it:corD:1}) we repeat the proof of Corollary \ref{cor:C}, using the isotopy extension theorem for $T_\infty\mathrm{Emb}(-, M)$ of Knudsen--Kupers \cite[Theorem 6.1]{KnudsenKupers}. Under our assumption on the handle dimension of $N$ we have $\mathrm{Emb}(N, M) \overset{\sim}\to T_\infty\mathrm{Emb}(N, M)$, and for any $e \in \mathrm{Emb}(N, M)$ this isotopy extension theorem gives a fibre sequence
$$T_\infty \mathrm{Emb}_{\partial N}(C, M \setminus e(\mathrm{int}(N))) \lra T_\infty\mathrm{Emb}(N', M) \overset{\mathrm{res}}\lra T_\infty\mathrm{Emb}(N, M),$$
where $C : \partial N \leadsto \partial N'$ is as in the proof of Corollary \ref{cor:C}. As $N$ has a handle structure with all handles of index $\leq d-3$, it can be constructed from $\partial N$ by attaching handles of index $\geq 3$. Thus the inclusion $C \to N'$ induces an isomorphism on $\pi_1$, so all local coefficient systems on $C$ extend to $N'$. It then follows by excision that the inclusion $\partial N \to C$ is acyclic, and the argument of Section 2 then shows that $T_\infty \mathrm{Emb}_{\partial N}(C, M \setminus e(\mathrm{int}(N)))$ is contractible.

In case (\ref{it:corD:2}) we follow Boavida de Brito--Weiss \cite{weisspedrosheaves} (see also \cite{KKDisc}) and formulate embedding calculus in terms of the presheaves $E_{N}(-) := \mathrm{Emb}(-, N)$ on the category $\mathsf{Disc}_d$ of finite disjoint unions of $d$-dimensional Euclidean spaces and embeddings between them. In this formulation $T_\infty \mathrm{Emb}(-, M)$ is the mapping space $\mathrm{Map}_{\mathsf{PSh}(\mathsf{Disc}_d)}(E_{-}, E_M)$ in the $\infty$-category of $\mathsf{Disc}_d$-presheaves, so it suffices to show that the induced map $E_N \to E_{N'}$ is an equivalence of presheaves. That is, that the map
  \begin{equation*}
    E_N(U) = \mathrm{Emb}(U, N) \lra \mathrm{Emb}(U, N') = E_{N'}(U)
  \end{equation*}
  is an equivalence for all $U \in \mathsf{Disc}_d$.  This may be seen by induction on the number of balls in $U$ as follows. If $U = \bR^d \amalg V$ then we can form the commutative diagram
\begin{equation*}
  \begin{tikzcd}
\mathrm{Emb}(\bR^d \amalg V, N) \rar \dar & \mathrm{Emb}(\bR^d \amalg V, N') \dar\\
\mathrm{Emb}(D^d, N) \rar & \mathrm{Emb}(D^d, N')
  \end{tikzcd}
\end{equation*}
where the vertical maps are given by restriction to $D^d \subset \bR^d$, and are Serre fibrations by the isotopy extension theorem. The bottom map may be identified with the map between frame bundles $\mathrm{Fr}(N) \to \mathrm{Fr}(N')$, which is an equivalence, so it remains to show that the map between fibres is an equivalence. The vertical fibre over an $e : D^d \hookrightarrow N$ can be identified with
$$\mathrm{Emb}_{\partial e(D^d)}(\bR^d \setminus \mathrm{int}(e(D^d)) \amalg V, N \setminus \mathrm{int}(e(D^d))) \overset{\sim}\lra \mathrm{Emb}(V, N \setminus e(D^d)),$$
where the equivalence is by the contractibility of the space of (open) collars, and similarly for $N'$. As $N \hookrightarrow N'$ is a homotopy equivalence so is $N \setminus e(D^d) \hookrightarrow N' \setminus e(D^d)$, and hence $\mathrm{Emb}(V, N \setminus e(D^d)) \to \mathrm{Emb}(V, N' \setminus e(D^d))$ is an equivalence by induction as $V$ consists of fewer balls than $U$. 
\end{proof}

The argument for Corollary \ref{cor:D} (\ref{it:corD:2}) we have given means it is not actually a corollary of our earlier results: we are grateful to a referee for suggesting this argument, which replaces a more complicated one which used Theorem \ref{thm:B}. As an application of this result, we give the following concrete example of non-convergence of embedding calculus.

\begin{example}
Let $\Delta$ be a contractible smooth manifold of dimension $d \geq 6$. The double $\Delta \cup_{\partial \Delta} \Delta$ is simply-connected and so is a homotopy sphere. It is the boundary of the contractible manifold $\Delta \times [0,1]$, and removing a disc from the interior of this manifold shows that $\Delta \cup_{\partial \Delta} \Delta$ is $h$-cobordant to $S^d$, so is diffeomorphic to it. In particular, it follows that there is a smooth embedding $f : \Delta \hookrightarrow D^d$. This induces an orientation of $\Delta$, and there is also an oriented embedding $e : D^d \hookrightarrow \Delta$ given by a coordinate chart. 

If $\partial \Delta$ is not simply-connected, then the composition $e \circ f : \Delta \hookrightarrow \Delta$ is \emph{not} isotopic to the identity: if it were then its complement would be a cobordism $\partial \Delta \leadsto \partial\Delta$ diffeomorphic to a cylinder, but by construction it factors as $\partial \Delta \leadsto S^{d-1} \leadsto \partial \Delta$ and the second cobordism is simply-connected so this cannot be diffeomorphic to a cylinder.

By Corollary \ref{cor:D} (\ref{it:corD:2}) the restriction map
$$T_\infty \mathrm{Emb}(\Delta, \Delta) \lra T_\infty \mathrm{Emb}(D^d, \Delta) \simeq \mathrm{Emb}(D^d, \Delta)$$
is an equivalence. The elements $e \circ f, \mathrm{id}_\Delta \in  \mathrm{Emb}(\Delta, \Delta)$ lie in the same path component when restricted along $e : D^d \to \Delta$, as $f \circ e$ is isotopic to the identity, so it follows that $T_\infty(e \circ f)$ and $T_\infty(\mathrm{id})$ lie in the same path-component of $T_\infty \mathrm{Emb}(\Delta, \Delta)$. Thus $\mathrm{Emb}(\Delta, \Delta) \to T_\infty \mathrm{Emb}(\Delta, \Delta)$ is not $\pi_0$-injective.
\end{example}

\bibliographystyle{amsalpha}
\bibliography{biblio}

\newcommand{\etalchar}[1]{$^{#1}$}
\def\cprime{$'$} \def\cprime{$'$}
\providecommand{\bysame}{\leavevmode\hbox to3em{\hrulefill}\thinspace}
\providecommand{\MR}{\relax\ifhmode\unskip\space\fi MR }
\providecommand{\MRhref}[2]{%
  \href{http://www.ams.org/mathscinet-getitem?mr=#1}{#2}
}
\providecommand{\href}[2]{#2}
\begin{thebibliography}{KLMME24}

\bibitem[Ale23]{Alexander}
J.~W. Alexander, \emph{On the deformation of an $n$-cell}, Proc. Natl. Acad.
  Sci. U.S.A. \textbf{9} (1923), no.~12, 406--407.

\bibitem[BdBW13]{weisspedrosheaves}
P.~Boavida~de Brito and M.~Weiss, \emph{Manifold calculus and homotopy
  sheaves}, Homology Homotopy Appl. \textbf{15} (2013), no.~2, 361--383.

\bibitem[BKK{\etalchar{+}}21]{DET}
S.~Behrens, B.~Kalmar, M.~H. Kim, M.~Powell, and A.~Ray, \emph{{The Disc
  Embedding Theorem}}, Oxford University Press, 2021.

\bibitem[ERW19]{ER-WSx}
J.~Ebert and O.~Randal-Williams, \emph{Semisimplicial spaces}, Algebr. Geom.
  Topol. \textbf{19} (2019), no.~4, 2099--2150.

\bibitem[FQ90]{FreedmanQuinn}
M.~H. Freedman and F.~Quinn, \emph{Topology of 4-manifolds}, Princeton
  Mathematical Series, vol.~39, Princeton University Press, Princeton, NJ,
  1990.

\bibitem[GRW14]{GRWActa}
S.~Galatius and O.~Randal-Williams, \emph{Stable moduli spaces of
  high-dimensional manifolds}, Acta Math. \textbf{212} (2014), no.~2, 257--377.

\bibitem[GRW17]{grwstab2}
\bysame, \emph{Homological stability for moduli spaces of high dimensional
  manifolds. {II}}, Ann. of Math. (2) \textbf{186} (2017), no.~1, 127--204.

\bibitem[GRW23]{GRWPont}
\bysame, \emph{Algebraic independence of topological {P}ontryagin classes}, J.
  Reine Angew. Math. \textbf{802} (2023), 287--305.

\bibitem[GT06]{GuilbaultTinsleyIII}
C.~R. Guilbault and F.~C. Tinsley, \emph{Manifolds with non-stable fundamental
  groups at infinity. {III}}, Geom. Topol. \textbf{10} (2006), 541--556.

\bibitem[GT13]{GuilbaultTinsley}
\bysame, \emph{Spherical alterations of handles: embedding the manifold plus
  construction}, Algebr. Geom. Topol. \textbf{13} (2013), no.~1, 35--60.

\bibitem[GW99]{EmbImmII}
T.~G. Goodwillie and M.~Weiss, \emph{Embeddings from the point of view of
  immersion theory. {II}}, Geom. Topol. \textbf{3} (1999), 103--118.

\bibitem[HH67]{HsiangHsiang}
W.-C. Hsiang and W.-Y. Hsiang, \emph{Differentiable actions of compact
  connected classical groups. {I}}, Amer. J. Math. \textbf{89} (1967),
  705--786.

\bibitem[HH79]{HausmannHusemoller}
Jean-Claude Hausmann and Dale Husemoller, \emph{Acyclic maps}, Enseign. Math.
  (2) \textbf{25} (1979), no.~1-2, 53--75.

\bibitem[Ker69]{Kervaire}
M.~A. Kervaire, \emph{Smooth homology spheres and their fundamental groups},
  Trans. Amer. Math. Soc. \textbf{144} (1969), 67--72.

\bibitem[KK22]{KKDisc}
M.~Krannich and A.~Kupers, \emph{The {D}isc-structure space}, Forum Math.\ Pi,
  to appear. \url{https://arxiv.org/abs/2205.01755}, 2022.

\bibitem[KK24a]{KnudsenKupers}
B.~Knudsen and A.~Kupers, \emph{Embedding calculus and smooth structures},
  Geom. Topol. \textbf{28} (2024), no.~1, 353--392.

\bibitem[KK24b]{KK24}
M.~Krannich and A.~Kupers, \emph{{$\infty$}-operadic foundations for embedding
  calculus}, \url{https://arxiv.org/abs/2409.10991}, 2024.

\bibitem[KLMME24]{Konnoetal}
H.~Konno, J.~Lin, A.~Mukherjee, and J.~Muñoz-Echániz, \emph{On
  four-dimensional {D}ehn twists and {M}ilnor fibrations},
  \url{https://arxiv.org/abs/2409.11961}, 2024.

\bibitem[KMPW24]{Krushkaletal}
V.~Krushkal, A.~Mukherjee, M.~Powell, and T.~Warren, \emph{Corks for exotic
  diffeomorphisms}, \url{https://arxiv.org/abs/2407.04696}, 2024.

\bibitem[KS77]{kirbysiebenmann}
R.~C. Kirby and L.~C. Siebenmann, \emph{Foundational essays on topological
  manifolds, smoothings, and triangulations}, Princeton University Press,
  Princeton, N.J.; University of Tokyo Press, Tokyo, 1977, Annals of
  Mathematics Studies, No. 88.

\bibitem[MV15]{munsonvolic}
B.~A. Munson and I.~Voli\'{c}, \emph{Cubical homotopy theory}, New Mathematical
  Monographs, vol.~25, Cambridge University Press, Cambridge, 2015.

\bibitem[New66]{NewmanEngulfing}
M.~H.~A. Newman, \emph{The engulfing theorem for topological manifolds}, Annals
  of Mathematics \textbf{84} (1966), no.~3, 555--571.

\bibitem[Rap19]{RaptisAcyclic}
G.~Raptis, \emph{Some characterizations of acyclic maps}, J. Homotopy Relat.
  Struct. \textbf{14} (2019), no.~3, 773--785.

\bibitem[Rol18]{Rolland}
J.~J. Rolland, \emph{{A Geometric Reverse to the Plus Construction and Some
  Examples of Pseudocollars on High-Dimensional Manifolds}}, Michigan Math. J.
  \textbf{67} (2018), no.~3, 485 -- 509.

\bibitem[SY15]{SuYe}
Y.~Su and S.~Ye, \emph{Fundamental groups, homology equivalences and one-sided
  {$h$}-cobordisms}, Sci. China Math. \textbf{58} (2015), no.~9, 2003--2014.

\bibitem[Wei99]{EmbImmI}
M.~Weiss, \emph{Embeddings from the point of view of immersion theory. {I}},
  Geom. Topol. \textbf{3} (1999), 67--101.

\end{thebibliography}

\end{document}